\newcommand{\mysection}[1]{\section{#1}
      \setcounter{equation}{0}}
\newtheorem{theorem}{Theorem}[section]
\newtheorem{lemma}[theorem]{Lemma}
\newtheorem{corollary}[theorem]{Corollary}
\theoremstyle{definition}
\newtheorem{assumption}{Assumption}[section]
\newtheorem{definition}{Definition}[section]
\newtheorem{example}{Example}[section]
\theoremstyle{remark}
\newtheorem{remark}{Remark}[section]
\newcommand\bG{\mathbb{G}}
\newcommand\bR{\mathbb{R}}
\newcommand\cB{\mathcal{B}}
\newcommand\cF{\mathcal{F}}
\newcommand\cP{\mathcal{P}}
\newcommand\cR{\mathcal{R}}
 \newcommand{\sumstar}%
 {\operatornamewithlimits{\sum@\kern-.2em\raise1ex\hbox{*}}}
\newcommand*{\beq}{\begin{equation}}
\newcommand*{\eeq}{\end{equation}}
\newcommand{\E}{\mathbb{E}}
\newcommand{\R}{\mathbb{R}}
\newcommand{\bit}{\begin{itemize}}
\newcommand{\eit}{\end{itemize}}
\DeclareMathOperator*{\esssup}{ess\,sup}
\begin{document}
\title[Finite difference schemes]{Finite difference schemes for stochastic partial 
differential equations in Sobolev spaces}
\author[M. Gerencs\'er]{M\'at\'e Gerencs\'er}
\address{Department of Mathematics and
Statistics, University of Edinburgh,
King's  Buildings,
Edinburgh, EH9 3JZ,
     United Kingdom}
\email{m.gerencser@sms.ed.ac.uk}

\author[I. Gy\"ongy]{Istv\'an Gy\"ongy}
\address{Department of Mathematics and
Statistics, University of Edinburgh,
King's  Buildings,
Edinburgh, EH9 3JZ,
     United Kingdom}
\email{i.gyongy@ed.ac.uk}

\subjclass[2000]{65M06, 60H15, 65B05}
\keywords{Cauchy problem, stochastic PDEs, 
finite differences, extrapolation to the limit, 
Richardson's method }

\begin{abstract}
We discuss $L_p$-estimates for finite difference schemes approximating parabolic, possibly degenerate, SPDEs, with initial conditions from $W^m_p$ and free terms taking values in $W^m_p.$ Consequences of these estimates include an asymptotic expansion of the error, allowing the acceleration of the approximation by Richardson's method.
\end{abstract}
\maketitle

\section{Introduction}
In this paper spatial finite difference schemes for parabolic 
stochastic partial differential equations (SPDEs) are considered. 
In the literature finite difference approximations for 
deterministic partial differential equations are well studied, 
we refer the reader to \cite{CL}, to recent results in \cite{DK}, 
and the references therein. There is a growing number of publications 
on finite difference schemes also for SPDEs, see e.g. \cite{DG}, 
\cite{GyM}, \cite{Y}, and their references.  In recent papers,  
see e.g. \cite{Gy2}, \cite{Gy0}, \cite{GyK}, \cite{H},  
$L_2$-theory is used to estimate in $W^m_2$-norms the 
error of  
finite difference approximations for the solutions of parabolic SPDEs. Hence error estimates in 
supremum norms are proved via Sobolev's embedding if $2m$ is larger 
than the dimension $d$ of the state space $\bR^d$.  
Therefore to get estimates in supremum norm, in these papers 
unnecessary spatial smoothness of the coefficients of the equation are required. 
Moreover, the smoothness conditions in these papers depend on the dimension 
of the state space. Our aim is to overcome this problem and generalize 
the results of \cite{Gy0} by giving $W^m_p$-norm estimates, 
assuming that the initial condition is in $W^m_p$ and the free terms 
are $W^m_p$-valued processes. 
This forces us to give up  part of generality, but important examples, 
like the Zakai equation in case of uncorrelated noises, are included. 
Since bounded functions, or more generally, functions with polynomial growth, 
can be seen as elements of suitable weighted Sobolev spaces 
with arbitrarily large integrability exponent $p$, for equations 
with such data we get dimension-invariant conditions 
on the smoothness of the coefficients. 

It should be noted 
that the $L_p$- and $L_q(L_p)$-theory of SPDEs are well developed, 
see e.g. \cite{Kim}, \cite{K2}, \cite{K3}. 
Their results, however, will not be used, as these theories
deal with uniformly parabolic SPDEs, while the equations 
in this paper may degenerate and become first order SPDEs.

Following the idea seen in \cite{K1}, to estimate the solutions 
of finite difference schemes we consider them  in
the whole space rather than on a grid. Through the estimates obtained 
for their Sobolev norms on the whole space, this allows us to estimate 
their supremum norm on a grid. For the finite difference approximations
not only their convergence is proved, but also power series expansion 
in the mesh size is obtained. As in \cite{GyK}, this allows us 
to accelerate the rate of convergence, using the well known 
Richardson extrapolation, introduced in \cite{R}.

Finally, let us introduce some notation used throughout the paper. 
We consider a complete probability space $(\Omega,\mathcal{F},P)$, 
which is  
equipped with a filtration $\mathbb{F}=(\mathcal{F}_t)_{t\geq0}$ and carries 
a sequence of independent $\cF_t$-Wiener martingales 
$(w^r)_{r=1}^{\infty}$. We use the notation $\cP$ for the $\sigma$-algebra 
of the predictable subsets of $\Omega\times[0,T]$.
It is assumed that $\mathcal{F}_0$ contains every $P$-zero set. 
For $p\geq 2$ and $m\geq 0$, $W^m_p$ denotes the Sobolev space 
with exponent $p$ and order $m$. For integer $m$, this is the space 
of functions whose generalized partial derivatives up to order $m$ are 
in $L_p$, for non integer real $m$, $W^m_p$ is 
a fractional Sobolev space, or, 
as often cited in the literature, Bessel potential space,  
for the definition we refer to \cite{T}. The Sobolev spaces of $l_2$-valued 
functions will be denoted by $W^m_p(l_2)$. 
We use the notation
$$
D_i=\frac{\partial}{\partial x^i}, \quad 
\partial_v=\sum_{i=1}^{d}v^{i}D_i
$$
for $v\in(v^1,...,v^d)\in \bR^d$, and 
$$
D^{\alpha}=D_1^{\alpha_1}D_2^{\alpha_2}....D^{\alpha_d}_d
$$
for multi-indices $\alpha=(\alpha_1,....,\alpha_d)\in\{0,1,....\}^d$ of length
$$
|\alpha|:=\alpha_1+\alpha_2+....+\alpha_d. 
$$
Derivatives are understood in the generalized sense unless otherwise noted.
The summation convention with respect to repeated indices 
is used thorough the paper, where it is not indicated otherwise.

The paper is organized as follows. 
Formulation of the problem and the statements of the main results 
are collected in Section \ref{section-formulation}. 
The appropriate estimate for the finite difference scheme is derived 
in Section \ref{section-estimate}, and it is used in the proof of the 
main results in Section \ref{section-proof}.

\section{Formulation of the results}\label{section-formulation}
We consider the SPDE
$$
du_t(x)=\{D_i(a_t^{ij}(x) D_j u_t(x))
+b_t^{i}(x)D_iu_t(x)
+c_t(x)u_t(x)+f_t(x)\}\,dt
$$
\begin{equation}                                                                              \label{SPDE}
+(\mu^{i r}_tD_iu_t(x)+\nu^r_t(x)u_t(x)+g^r(x))\,dw^r_t
\end{equation}
for $(t,x)\in[0,T]\times\R^d=:H_T$, with the initial condition
\begin{equation}                                                                            \label{SPDE-initial}
u_0(x)=\psi(x)\;\;x\in\R^d, 
\end{equation}
with the summation convention here and in the rest of the paper is used with respect to the repeated 
indices $i$, $j$ and $r$. 

The initial value $\psi$ is an $\mathcal{F}_0$-measurable 
random variable with values in $W^1_p$ for a fixed $p\geq2$. 
For all $i,j=1,2,...,d$ the coefficients $a^{ij}=a^{ji}$, $b^{i}$ and $c$ are real-valued 
$\mathcal{P}\times\mathcal{B}(\R^d)$-measurable bounded functions,  
and $\mu^{i}=(\mu^{i r})_{r=1}^{\infty}$ and  
$\nu=(\nu^{r})_{r=1}^{\infty}$ are $l_2$-valued 
$\mathcal{P}\times\mathcal{B}(\R^d)$-measurable bounded functions
on $\Omega\times H_T$. 
The free terms $f=(f_t)_{t\geq0}$ and $g=(g_t)_{t\geq0}$ are 
$W^1_p$-valued and $W^1_{p}(l_2)$
-valued adapted processes. 

Let $m\in[1,\infty)$ Set 
$$
F_{{m},p}(t)
=\left(\int_0^t|f_t|_{W^{{m}}_p}^pdt\right)^{1/p},
\quad 
G_{{m},p}(t)
=\left(\int_0^t|g_t|_{W^{{m}}_p(l_2)}^pdt\right)^{1/p},  
$$
and let $K>0$ be a constant. We make the following assumptions.

\begin{assumption}                                                                          \label{assumption 1}
The derivatives of the coefficients 
$b^{i}$ and $c$ in $x\in\bR^d$ up to order $\lceil m\rceil$, 
and the derivatives of $a^{ij}$ in $x$ up to order $\lceil m\rceil+1$ 
are functions, bounded by $K$. The $l_2$-valued functions 
$\mu^{i}$ and $\nu$ satisfy either of the following:

(i) their derivatives in $x$ up to order $\lceil m\rceil+1$ 
are functions, in magnitude bounded by $K$. 

(ii) $\mu=(\mu^i)_{i=1}^{d}=0$ 
and the derivatives of $\nu$ in $x$ up to order $\lceil m\rceil$ 
are functions, in magnitude bounded by $K$.
\end{assumption}

\begin{assumption}                                                                          \label{assumption 2}
Almost surely $\psi\in W^{m}_p$, and either

(i) $F_{{m},p}(T)+G_{{m+1},p}(T)<\infty$ (a.s.), or

(ii) $\mu=0$ and $F_{{m},p}(T)+G_{{m},p}(T)<\infty$ (a.s.).
\end{assumption}

\begin{assumption}                                                                          \label{assumption 3}
Almost surely the matrix valued function
$$
{\tilde a}^{ij}_t(x):={a}^{ij}_t(x)-\tfrac{1}{2}\mu_t^{i r}(x)\mu_t^{j r}(x), 
\quad i,j=1,...,d
$$ 
 is positive semidefinite for each $(t,x)\in H_T$.

\end{assumption}
The notion of (generalised) solution is defined as follows. 
\begin{definition}                                                                                \label{definition solution}
A $W^1_p$-valued adapted weakly continuous process $(u_t)_{t\in[0,T]}$ 
is a solution of $\eqref{SPDE}$-\eqref{SPDE-initial} on the interval 
$[0,\tau]$ for a stopping time $\tau\leq T$, if almost surely
$$
(u_t,\varphi)
=(\psi,\varphi)+\int_0^t\{-(a^{ij}_sD_ju_s,D_i\varphi)
+(b_s^{i}D_iu_s+c_su_s+f_s,\varphi)\}\,ds
$$
\begin{equation}                                                                                         \label{solution}
+\int_0^t(\mu^{i r}_sD_iu_s
+\nu^r_su_s+g^r_s,\varphi)\,dw^r_s,
\end{equation}
for all $t\in[0,\tau]$, $\varphi\in C^{\infty}_0(\R^d)$, 
where 
$(v,\varphi)$ denotes the integral
$$
\int_{\bR^d}v(x)\varphi(x)\,dx
$$
for functions $\varphi$ and $v$ on $\bR^d$, when 
$v\varphi\in L_1(\bR^d)$.
\end{definition}

Existence and uniqueness theorems for degenerate SPDEs 
are established in \cite{KR} and \cite{GGK}.  
We will need a slight generalization of these results, 
which will be proven at the end of Section \ref{section-estimate}.

\begin{theorem}                                                                             \label{theorem 1}
Let Assumptions \ref{assumption 1}, \ref{assumption 2} and 
\ref{assumption 3} hold. Then \eqref{SPDE}-\eqref{SPDE-initial} 
has a unique solution $u=(u_t)_{t\in[0,T]}$ on $[0,T]$. Moreover, 
$u$ is a $W^m_p$-valued weakly continuous process, 
it is strongly continuous with values in $W^{m-1}_p$, 
and for all $l\in[0,m]$ and $q>0$ 
\begin{equation}                                                                              \label{l}
E\sup_{t\in[0,T]}|u(t)|^q_{W^l_p}
\leq N(E|\psi|^q_{W^l_p}+EF^q_{l,p}(T)+EG^q_{l+\kappa,p}). 
\end{equation}
where $\kappa=0$ if $(\mu^i)=0$ and $\kappa=1$ otherwise, 
and $N$ is a constant depending only on $T$, $d$, $K$, $p$, and $m$.
\end{theorem}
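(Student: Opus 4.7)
The plan is to reduce the statement to an $L_p$-estimate for a transformed equation, and then to derive that $L_p$-estimate through It\^o's formula, using Assumption \ref{assumption 3} to produce the crucial cancellation. For existence we approximate by a non-degenerate SPDE for which the $L_p$-theory of \cite{Kim}, \cite{K2} already delivers a solution, then pass to the limit via the a priori bound \eqref{l}, while uniqueness follows from the same bound applied to the difference of two solutions.

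First, for integer $l$, I would differentiate equation \eqref{SPDE} in $x$, obtaining an equation of the same structural form for $v=D^\alpha u$, $|\alpha|\le l$, with extra terms arising from the $x$-derivatives of the coefficients; these terms are controlled by lower-order $W^{l}_p$ norms of $u$ thanks to the bounds in Assumption \ref{assumption 1}. For non-integer $l$, the analogous reduction is performed via the Bessel potential $\Lambda^l:=(1-\Delta)^{l/2}$, applying $\Lambda^l$ to the equation and invoking commutator estimates of Kato--Ponce / Coifman--Meyer type: since the coefficients have $\lceil m\rceil$ (resp.\ $\lceil m\rceil+1$) bounded derivatives, the commutators $[\Lambda^l,a^{ij}\cdot]$, $[\Lambda^l,b^i\cdot]$, etc.\ are bounded on $L_p$ and $L_p(l_2)$ for $l\in[0,m]$. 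Hence everything reduces to an $L_p$-bound for $w:=\Lambda^l u$, which solves an equation of the form
\beq                                                                                             \label{planeq}
dw_t=\{D_i(a^{ij}_tD_jw_t)+\tilde b^i_tD_iw_t+\tilde c_tw_t+\tilde f_t\}\,dt
+(\mu^{ir}_tD_iw_t+\tilde \nu^r_tw_t+\tilde g^r_t)\,dw^r_t,
\eeq
with $\tilde f$, $\tilde g$ enjoying the same $L_p$, $L_p(l_2)$ bounds (up to the constants depending on $d$, $K$, $m$, $p$) as $F_{l,p}$, $G_{l+\kappa,p}$ of $f$, $g$.

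The main step is then the $L_p$-estimate for \eqref{planeq}. I would apply It\^o's formula to $|w_t|^p_{L_p}=\int|w_t(x)|^p\,dx$ (rigorously justified first on the non-degenerate approximation with extra viscosity $\varepsilon\Delta u$, where $w$ is smooth enough). The drift contribution is
\[
p\int|w|^{p-2}w\{D_i(a^{ij}D_jw)+\tilde b^iD_iw+\tilde c w+\tilde f\}\,dx,
\]
and integration by parts in the leading term yields $-p(p-1)\int|w|^{p-2}a^{ij}D_iwD_jw\,dx$ plus terms absorbed into lower order. The It\^o correction from the stochastic integral contributes
\[
\tfrac{p(p-1)}{2}\int|w|^{p-2}\sum_r(\mu^{ir}D_iw+\tilde\nu^rw+\tilde g^r)^2\,dx,
\]
whose leading $(D_iw)(D_jw)$ piece equals $p(p-1)\int|w|^{p-2}\,\tfrac{1}{2}\mu^{ir}\mu^{jr}D_iwD_jw\,dx$. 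Adding the two contributions produces the quadratic form $-p(p-1)\int|w|^{p-2}\tilde a^{ij}D_iwD_jw\,dx$, which by Assumption \ref{assumption 3} is non-positive and may be dropped. The remaining terms are linear in $|w|^p_{L_p}$ (using Cauchy--Schwarz in the first-order terms, with the necessary absorption into the dropped quadratic form handled in case (i) via Assumption \ref{assumption 1}(i)), plus the forcing contribution bounded by $(F_{l,p}+G_{l+\kappa,p})|w|^{p-1}_{L_p}$. A Gronwall argument combined with the Burkholder--Davis--Gundy inequality applied to the martingale part $p\int|w|^{p-2}w(\mu^{ir}D_iw+\tilde\nu^rw+\tilde g^r)\,dx\,dw^r$ then yields \eqref{l}, first for $q=p$ and, by standard truncation/renormalization, for arbitrary $q>0$.

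The expected main obstacle is the handling of the first-order stochastic term $\mu^{ir}D_iu$ at the $L_p$ level. Unlike the $L_2$ case, the cancellation above is the \emph{exact} coercivity margin, leaving no room to absorb the remainders $\int|w|^{p-2}\mu^{ir}D_iw\,\tilde\nu^rw\,dx$ into the quadratic form; these must instead be controlled pointwise, which is why derivatives of $\mu$ of order $\lceil m\rceil+1$ are needed in Assumption \ref{assumption 1}(i) (so that the commutator $[\Lambda^l,\mu^{ir}D_i]$ is of order $l$, not $l+1$), and why case (ii) with $\mu=0$ requires weaker smoothness on $\nu$. Existence and uniqueness now follow by the standard procedure: solve \eqref{SPDE} with additional viscosity $\varepsilon\Delta u$ via \cite{Kim}, apply \eqref{l} uniformly in $\varepsilon$, extract a weak limit in $W^m_p$, and verify that the limit solves \eqref{solution}; uniqueness is the same estimate for the difference of two solutions with $\psi=f=g=0$. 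The weak continuity in $W^m_p$ and strong continuity in $W^{m-1}_p$ follow from the uniform $W^m_p$ bound together with the fact that, read in $W^{m-1}_p$, the right-hand side of \eqref{solution} defines an almost surely continuous process.
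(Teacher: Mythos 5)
Your proposal attacks the theorem head-on, but this is not what the paper does, and the direct attack as you have written it has a genuine gap precisely at the crux. The paper's own proof is a reduction: for integer $m$ it rewrites \eqref{SPDE} in non-divergence form and quotes Theorem 2.1 of \cite{GGK}; for non-integer $m$ it interpolates the solution operator (complex interpolation, via \cite{T}) between the integer levels $\lfloor m\rfloor$ and $\lceil m\rceil$, then recovers weak $W^m_p$-continuity and strong $W^{m-1}_p$-continuity by a duality argument combined with Krylov's It\^o formula for $L_p$-norms \cite{K0}, and finally passes from $q>1$ to all $q>0$ using Lemma 3.2 of \cite{GK2003}. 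So the hard estimate you propose to derive is exactly the content of the cited result, not something the paper re-proves.

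The gap in your derivation is the treatment of the top-order cross terms. After you differentiate the equation (integer $l$) or commute with $\Lambda^l$ (non-integer $l$), the dangerous terms are not the ones you single out. From the drift you get terms of the form $D_i\bigl(D^{\alpha'}a^{ij}\,D_jD^{\alpha''}u\bigr)$ with $|\alpha'|=1$, $|\alpha''|=l-1$; testing against $|w|^{p-2}w$ and integrating by parts pairs a derivative of order $l+1$ with one of order $l$. The It\^o correction likewise contains cross terms $\bigl(\mu^{ir}D_iD^{\alpha}u\bigr)\bigl(D^{\alpha'}\mu^{jr}D_jD^{\alpha''}u\bigr)$ of order $(l+1)\times l$. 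Since you have already dropped the quadratic form $-p(p-1)\int|w|^{p-2}\tilde a^{ij}D_iw\,D_jw\,dx$ --- and, as you yourself observe, the cancellation is the exact coercivity margin --- there is nothing left to absorb these into, and they cannot be ``controlled pointwise'': integration by parts only moves the order-$(l+1)$ derivative elsewhere. The actual mechanism (this is the content of \cite{GGK}, and the reason Assumption \ref{assumption 1} demands one extra derivative of $a$ and of $\mu$) is: split $a=\tilde a+\tfrac12\mu\mu^{*}$ \emph{before} estimating, so that the $\mu\mu^{*}$ cross terms produced by the drift cancel algebraically against the It\^o cross terms, while the $\tilde a$ cross terms are absorbed into the \emph{retained} $\tilde a$-form using that a nonnegative function with bounded second derivatives satisfies $|D\tilde a|\le N\tilde a^{1/2}$ (a square-root trick; compare the role of $\sigma=\sqrt{\mathfrak a}$ and the $\varepsilon$-absorption in the paper's Lemma \ref{lemma-estimate}, which is the finite-difference incarnation of this step in the case $\mu=0$, where the full $\mathfrak a$-form is available). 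Your proposal never performs this algebra, and without it the main step fails wherever $\tilde a$ degenerates.

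A second, related gap is the non-integer case via Kato--Ponce commutators. The commutator $[\Lambda^l,a^{ij}]D_ju$ is indeed an $L_p$ function under Assumption \ref{assumption 1}, but in the equation for $w=\Lambda^l u$ it appears inside the divergence, $D_i\bigl([\Lambda^l,a^{ij}]D_ju\bigr)$, i.e., as a $W^{-1}_p$-valued free term. For degenerate equations there is no smoothing with which to handle such free terms: estimating them against $|w|^{p-2}w$ reintroduces $D_iw$, of order $l+1$, with nothing to absorb it. This is precisely why the paper interpolates the solution operator between integer orders instead of commuting fractional derivatives through a degenerate equation. Your existence step (vanishing viscosity plus the nondegenerate $L_p$-theory of \cite{Kim}) and your uniqueness step (the $l=0$ estimate via \cite{K0} applied to the difference of two $W^1_p$-valued solutions) are sound in principle, but the a priori estimate they both rely on is not established by your argument.
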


While Theorem \ref{theorem 1} is stated for a general equation 
of the form \eqref{SPDE}-\eqref{SPDE-initial}, 
all of the subsequent results will only be proven under the restriction 
$mu=0$.

To introduce the finite difference schemes approximating 
$\eqref{SPDE}$ first let $\Lambda_0,\Lambda_1\subset\R^d$ 
be two finite sets, the latter being symmetric to the origin, 
and $0\in\Lambda_1\setminus\Lambda_0$. 
Denote 
$$
\Lambda=\Lambda_0\cup-\Lambda_0\cup\Lambda_1
$$ 
and $|\Lambda|=\sum_{\lambda\in\Lambda}|\lambda|$. 
On $\Lambda$ we make the following assumption.
\begin{assumption}                                                             \label{AsLambda}
If any subset $\Lambda'\subset\Lambda$ is linearly dependent, 
then $\Lambda'$ is linearly dependent over the rationals.
\end{assumption}
Let $\bG_h$ denote the grid
$$
\bG_h=\{h(\lambda_1+\ldots+\lambda_n):\lambda_i\in\Lambda,n=1,2,...\},
$$
for $h>0$, and define the finite difference operators
$$
\delta_{h,\lambda}\varphi(x)=(1/h)(\varphi(x+h\lambda)-\varphi(x))
$$
and the shift operators
$$
T_{h,\lambda}\varphi(x)=\varphi(x+h\lambda)
$$
for $\lambda\in\Lambda$ and $h\neq0$. 
Notice that $\delta_{h,0}\varphi=0$ 
and $T_{h,0}\varphi=\varphi$. 
For a fixed $h>0$ consider the finite difference equation
\begin{equation}                                                                  \label{differenceeq}
du_t^h(x)=(L_t^h(x)u_t^h(x)+f_t(x))\,dt
+(\nu^r_t(x)u_t^h(x)+g^r_t(x))\,dw_t^r,
\end{equation}
for $(t,x)\in [0,T]\times \bG_h$, 
with the initial condition 
\begin{equation}\label{differenceeq-initial}
u_0^h(x)=\psi(x)
\end{equation}
for $x\in \bG_h$, where
$$
L^h_t\varphi
=\sum_{\lambda\in\Lambda_0}
\delta_{-h,\lambda}(
\mathfrak{a}_h^{\lambda}\delta_{h,\lambda}\varphi)
+\sum_{\gamma\in\Lambda_1}\mathfrak{p}_h^{\gamma}\delta_{h,\gamma}
\varphi
+\sum_{\gamma\in\Lambda_1}\mathfrak{c}_h^{\gamma}T_{h,\gamma}
\varphi
$$
for functions $\varphi$ on $\bG_h$.
The coefficients $\mathfrak{a}_h^{\lambda}$, $\mathfrak{p}_h^{\gamma}$,  
and $\mathfrak{c}_h^{\gamma}$ are $\mathcal{P}\times\mathcal{B}(\R^d)$-measurable 
bounded functions on $\Omega\times[0,T]\times\R^d$, with values in $\R$, 
and $\mathfrak{p}^0_h=0$ is assumed. 
All of them are supposed to be defined for $h=0$ as well, 
and to depend continuously on $h$. 

Note that Assumption \ref{AsLambda} ensures that 
$\bG_h\cap B$ 
is finite for any bounded set $B\subset\R^d$. 
This condition is necessary for \eqref{differenceeq} 
to be useful from a practical point of view.

One can look for solutions 
of the above scheme in the space of adapted stochastic processes 
with values in $l_{p,h}$, the space of real functions $\phi$ on 
$\bG_h$ such that
$$
|\phi|_{l_{p,h}}^p=\sum_{x\in \bG_h}|\phi(x)|^ph^d<\infty.
$$
The similar space is defined for $l_2$-valued functions and will be denoted by 
$l_{p,h}(l_2)$. For a fixed $h$ equation \eqref{differenceeq} 
is an SDE in $l_{p,h}$, 
with Lipschitz coefficients, by the boundedness of 
$\mathfrak{a}_h^{\lambda},\mathfrak{p}_h^{\gamma}$, 
 $\mathfrak{c}_h^{\gamma}$, and $\nu^r$. 
Hence if almost surely
$$
|\psi|_{l_{p,h}}^p+\int_0^T|f_t|_{l_{p,h}}^p+|g_t|_{l_{p,h}(l_2)}^p\,dt<\infty,
$$
then \eqref{differenceeq}-\eqref{differenceeq-initial} 
admits a unique $l_{p,h}$-valued solution $(u^h_{t})_{t\in[0,T]}$.
\begin{remark}
By well-known results on Sobolev embeddings, if $m>k+d/p$, 
there exists a bounded operator $J$ from $W^m_p$ 
to the space of functions with bounded 
and continuous derivatives up to order $k$ 
such that $Jv=v$ almost everywhere. 
In the rest of the paper we will always identify functions 
with their continuous modifications if they have one, 
without introducing new notation for them. 
It is also known, and can be easily seen, 
that if Assumption \ref{AsLambda} holds and 
$m>d/p$, then the for $v\in W^m_p$ the restriction of $Jv$ 
onto the grid $\bG_h$ is in $l_{p,h}$, moreover, 
\begin{equation}                                                      \label{embedding}
|Jv|_{l_{p,h}}\leq C|v|_{W^m_p},
\end{equation}
where $C$ is independent of $v$ and $h$.
\end{remark}
\begin{remark}
The $h$-dependency of the coefficients may seem artificial 
and in fact does not mean any additional difficulty in the proof 
of Theorems \ref{Thm1}-\ref{Thm3} below. 
However, we will make use of this generality to extend our results to 
the case when the data in the problem 
\eqref{SPDE}-\eqref{SPDE-initial} are in some weighted Sobolev spaces.
\end{remark}

Clearly
$$
\delta_{h,\lambda}\varphi(x)\rightarrow\partial_{\lambda}\varphi(x)
$$
as $h\rightarrow0$ for smooth functions $\varphi$, 
so in order to get that our finite difference operators 
approximate the corresponding differential operators, 
we make the following assumption.
\begin{assumption}                                                                              \label{As0}
We have, for every $i,j=1,\ldots,d$
\begin{equation}                                                                                  \label{may29}
a^{ij}
=\sum_{\lambda\in\Lambda_0}\mathfrak{a}_0^{\lambda}\lambda^i\lambda^j,
\end{equation}
\begin{equation}                                                                                    \label{may30}
b^{i}
=\sum_{\gamma\in\Lambda_1}\mathfrak{p}_0^{\gamma}\gamma^i,\,\,c
=\sum_{\gamma\in\Lambda_1}\mathfrak{c}_0^{\gamma},
\end{equation}
and for $P\times dt\times dx$-almost all $(\omega,t,x)$ 
we have
\begin{equation}                                                                               \label{nonnegativity}
\mathfrak{a}_h^{\lambda}\geq0,\,\,\,\mathfrak{p}_h^{\gamma}\geq0
\quad\text{for every 
$\lambda\in\Lambda_0$, $\gamma\in\Lambda_1$, $h\geq0$}.
\end{equation}
\end{assumption}

\begin{remark}
The restriction \eqref{may29} together with $\mathfrak{a}^{\lambda}_0\geq0$ 
is not too severe, we refer the reader to \cite{KFac} 
for a detailed discussion about matrix-valued functions 
which possess this property.
\end{remark}

\begin{example}                                                                                \label{example}
Suppose that the matrix $(a^{ij})$ is diagonal. 
Then taking $\Lambda_0=\{e_i:i=1\ldots d\}$ 
and $\Lambda_1=\{0\}\cup\{\pm e_i: i=1\ldots d\}$, 
where $(e_i)$ 
is the standard basis in $\R^d$, one can set
$$
\mathfrak{a}_h^{e_{i}}=a^{ii},\,\mathfrak{p}_h^{e_i}
=b^i+\theta^i,\,\mathfrak{p}_h^{-e_i}
=\theta_i,\,
\mathfrak c_h^0=c,\,
\mathfrak{p}_h^{0}=\mathfrak c_h^{\pm e_i}=0,
$$
with any $\theta^i\geq\max(0,-b^i)$, $i=1\ldots d$.
\end{example}

\begin{example}                                                                   \label{example2}
Suppose that $(a^{ij})$ is a 
$\mathcal{P}\otimes\cB(\bR^d)$-measurable function of 
$(\omega,t, x)$,
with values in a closed bounded polyhedron in the set 
of symmetric non-negative $d\times d$ matrices, such that its 
first and second order derivatives in $x\in\bR^d$ are continuous 
in $x$ and are bounded by a constant $K$. Then it is shown 
in  \cite{KFac} that one can obtain a finite set $\Lambda_0\subset\bR^d$ 
and $\mathcal{P}\otimes\cB(\bR^d)$-measurable, bounded, 
nonnegative functions 
$\mathfrak{a}^{\lambda}_0$, $\lambda\in\Lambda_0$ 
 such that \eqref{may29} holds, and the first and second order derivatives 
 of $\mathfrak{a}^{\lambda}_0$ in $x$ are bounded by a constant 
 $N$ depending only on $K$, $d$ and the polyhedron.   
 Such situation arises in applications when, for example,  
 $(a^{ij}_t(x))$ is a 
diagonally dominant symmetric non-negative definite matrix 
for each $(\omega,t,x)$, 
which by definition means that 
$$
2a^{ii}_t(x)\geq\sum_{j=1}^d|a^{ij}_t(x)|, \quad \text{for all $i=1,2,..,d$,  and $(\omega,t,x)$},  
$$
and hence it clearly follows that $(a^{ij})$ takes values 
in a closed polyhedron in the set 
of symmetric non-negative $d\times d$ matrices. Clearly, this polyhedron  
can be chosen to be bounded if 
$(a^{ij})$ is a bounded function.  
Moreover, in the case $d=2$ explicit formulas are given in \cite{K4}  to represent 
diagonally dominant symmetric non-negative definite matrices $(a^{ij})$ 
in the form \eqref{may29}. 

The coefficients of the first and zero order terms, 
i.e., $\mathfrak{p}^{\gamma}_h$ and 
$\mathfrak{c}^{\gamma}_h$ can be chosen as in Example \ref{example}.

If $(a^{ij})$ does not depend on $x$, 
and it is a bounded $\cP$-measurable function of $(\omega,t)$ with values 
in the set of diagonally dominant symmetric non-negative definite matrices, 
then we can take 
$$
\Lambda_0:=\{e_i, e_i+e_j,e_i-e_j:i,j=1,2,...,d\}, 
\quad 
\Lambda_1=\{0\}\cup\Lambda_0\cup-\Lambda_0, 
$$ 
where $(e_i)_{i=1}^d$ is the standard basis in $\bR^d$, and set 
$$
\mathfrak{a}_h^{\lambda}=\left\{ \begin{array}{lll}
a^{ij}-\sum_{j\neq i}|a^{ij}| & \mbox{if $\lambda=e_i$}\\
\tfrac{1}{2}\sum_{j\neq i}(a^{ij})^{+} & \mbox{if $\lambda=e_i+e_j$}\\
\tfrac{1}{2}\sum_{j\neq i}(a^{ij})^{-} & \mbox{if $\lambda=e_i-e_j$}\\
\end{array} \right. , 
$$
$$
\mathfrak{p}_h^{\gamma}=\left\{ \begin{array}{lll}
\pm\tfrac{1}{2}b^{i}+\theta^i & \text{if $\gamma=\pm e_i$}\\
\theta^{ij} & \text{if $\gamma=\pm(e_i+e_j)$}\\
\theta^{ij} & \text{if $\gamma=\pm(e_i-e_j)$}\\
\end{array} \right. , 
$$
$$
\mathfrak c_h^0=c,
\quad
\mathfrak{p}_h^{0}=\mathfrak c_h^{\gamma}=0
\quad
\text{for $\gamma\in\Lambda_1\setminus\{0\}$}, 
$$
with any constants $\theta^{ij}\geq \kappa$ 
and $\theta^i \geq \kappa-\tfrac{1}{2}|b^i|$, 
for $i,j=1,...,d$, where $\kappa$ is any nonnegative constant, and 
$a^{\pm}:=(|a|\pm a)/2$ for $a\in\bR$. 
Then clearly, $\Lambda_0$, $\Lambda_1$, $\mathfrak{a}_h^{\lambda}$, 
$\mathfrak{p}_h^{\gamma}$ and $\mathfrak c_h$ satisfy  
Assumptions \ref{AsLambda}, \ref{As0} above, and 
Assumption \ref{As1} below.

\end{example}

Since the compatibility condition \eqref{may29}-\eqref{may30} 
will always be assumed, any subsequent conditions will be 
formulated for the coefficients in \eqref{differenceeq}, 
which then automatically imply the corresponding 
properties for the coefficients in \eqref{SPDE}.

\begin{assumption}                                                                     \label{As1}
The coefficients 
$\mathfrak{a}_h^{\lambda}$ 
(resp., 
$\mathfrak{p}_h^{\gamma}$, $\mathfrak{c}_h^{\gamma}$, $\mu$), 
and their partial derivatives in the variable $(h,x)$ 
up to order $\lceil m\rceil+1$ (resp., $\lceil m\rceil$) 
are functions bounded by $K$.
\end{assumption}

\begin{assumption}                                                                        \label{As2}
The initial value $\psi$ is in $W^{{m}}_p$, 
and the free terms $f$ and $g$ 
are $W^{{m}}_p$-valued and $W^{{m}}_p(l_2)$-valued processes, 
respectively, 
such that almost surely $F_{m,p}(T)+G_{m,p}(T)<\infty$.
\end{assumption}

We are now about to present the main results. 
The first three theorems correspond to similar results 
in the $L_2$ setting from \cite{Gy0}. 
The key role in their proof is played by Theorem \ref{theorem-Apr9} 
below, 
which presents an upper bound for the $W^m_p$ norms of the solutions 
to \eqref{differenceeq}-\eqref{differenceeq-initial}. 
After obtaining this estimate, Theorems \ref{Thm1} through \ref{Thm3} 
can be proved in the same fashion as their counterparts in the $L_2$ setting. 
Therefore, in Section \ref{section-proof} only a sketch of the proof 
will be provided in which we highlight the main differences; 
for the complete argument we refer to \cite{Gy0}.

\begin{theorem}                                                                         \label{Thm1}
Let $k\geq0$ be an integer and let Assumptions \ref{AsLambda} 
through \ref{As2} 
hold with $m>2k+3+d/p$. 
Then there are continuous random fields $u^{(1)},\ldots u^{(k)}$ 
on $[0,T]\times \R^d$, 
independent of $h$, such that almost surely
\begin{equation}                                                                       \label{expansion}
u^h_t(x)=\sum_{j=0}^k\frac{h^j}{j!}u^{(j)}_t(x)+h^{k+1}r_t^h(x)
\end{equation}
for $t\in[0,T]$ and $x\in \bG_h$, where $u^{(0)}=u$, $r^h$ 
is a continuous random field on $[0,T]\times \R^d$, and for any $q>0$
$$
\E\sup_{t\in[0,T]}\sup_{x\in \bG_h}|r_t^h(x)|^q
+\E\sup_{t\in[0,T]}|r_t^h|^q_{l_{p,h}}
\leq N(\E|\psi|_{W^m_p}^q+\E F_{m,p}^q(T)+\E G_{m,p}^q(T))
$$
with $N=N(K,T,m,p,q,d,|\Lambda|).$
\end{theorem}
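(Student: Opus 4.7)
The plan is to import the strategy of the $L_2$ analysis in \cite{Gy0} into the $L_p$ setting, with Theorem \ref{theorem-Apr9} playing the role of the key energy estimate. Since the restriction $\mu=0$ is imposed, the diffusion part of \eqref{differenceeq} carries no $h$-dependence, and only the drift operator $L^h$ needs to be expanded. Exploiting Assumption \ref{As1} and the Taylor formulas for $\delta_{\pm h,\lambda}$ and $T_{h,\gamma}$ applied to smooth functions, I expect to obtain a decomposition
\[
L^h=\cL^{(0)}+h\cL^{(1)}+\ldots+h^{k+1}\cL^{(k+1)}+h^{k+2}\cR^h,
\]
where each $\cL^{(j)}$ is a linear differential operator of order at most $j+2$ with $\cP\otimes\cB(\bR^d)$-measurable, bounded coefficients, $\cL^{(0)}=L$ by the compatibility identities \eqref{may29}-\eqref{may30}, and $\cR^h$ has coefficients bounded uniformly in $h\in[0,1]$. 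The fields $u^{(j)}$ are then constructed by matching powers of $h$ in \eqref{differenceeq}: set $u^{(0)}:=u$ from Theorem \ref{theorem 1}, and recursively define $u^{(j)}$ for $j=1,\ldots,k$ as the unique solution of
\[
du^{(j)}_t=\Big(L_tu^{(j)}_t+\sum_{i=1}^j\binom{j}{i}\cL^{(i)}_tu^{(j-i)}_t\Big)dt+\nu^r_tu^{(j)}_t\,dw^r_t,\qquad u^{(j)}_0=0,
\]
whose well-posedness and spatial regularity follow from iterated application of Theorem \ref{theorem 1}; under the hypothesis $m>2k+3+d/p$, enough derivatives survive the recursion for each $u^{(j)}$ to admit a jointly continuous version on $[0,T]\times\bR^d$ via Sobolev embedding.

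Next, I would extend $u^h$ from $\bG_h$ to all of $\bR^d$ by solving \eqref{differenceeq} pointwise (the operators $\delta_{h,\lambda}$ and $T_{h,\gamma}$ are defined on $\bR^d$); this whole-space viewpoint is the one underpinning Theorem \ref{theorem-Apr9}. Define the extended remainder
\[
r^h_t(x):=h^{-(k+1)}\Big(u^h_t(x)-\sum_{j=0}^k\tfrac{h^j}{j!}u^{(j)}_t(x)\Big),\qquad (t,x)\in[0,T]\times\bR^d.
\]
A direct substitution using the defining equations for the $u^{(j)}$ shows that $r^h$ solves a finite-difference SPDE of the same form as \eqref{differenceeq} with zero initial condition, drift operator $L^h$, diffusion coefficient $\nu$, and a free term $F^h$ expressible via $\cR^h$ applied to the $u^{(j)}$'s; by the smoothness of the $u^{(j)}$ established above together with Assumption \ref{As1}, $F^h$ is $W^m_p$-valued with norm bounded uniformly in $h\in(0,1]$ by the right-hand side of the asserted inequality. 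Theorem \ref{theorem-Apr9} then yields the required bound on $\E\sup_{t\leq T}|r^h_t|^q_{W^m_p}$, and the Sobolev embedding \eqref{embedding} (valid since $m>d/p$) converts this into the claimed $l_{p,h}$-norm and pointwise-supremum bounds on $\bG_h$.

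The main technical obstacle is the regularity bookkeeping for the $u^{(j)}$: because $\cL^{(i)}$ has differential order $\leq i+2$ and Theorem \ref{theorem 1} demands the forcing to live in the same Sobolev class as the solution sought, each step of the recursion costs a fixed number of derivatives; this aggregate loss across $k$ iterations, combined with the extra derivatives required so that the free term of the remainder equation remains in $W^m_p$ and the $d/p$ reserve needed for Sobolev embedding at the end, is precisely what forces the hypothesis $m>2k+3+d/p$. Pushing the estimate through carefully at every level of the hierarchy, and verifying that $F^h$ indeed inherits a uniform-in-$h$ $W^m_p$-bound, is the heart of the argument.
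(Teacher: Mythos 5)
Your proposal follows the same architecture as the paper's proof: Taylor expansion of $L^h$, a hierarchy of SPDEs defining $u^{(1)},\ldots,u^{(k)}$ solved recursively by Theorem \ref{theorem 1}, a remainder that solves a finite difference SPDE of the form covered by Theorem \ref{theorem-Apr9}, and Sobolev embedding to pass to sup-norm and $l_{p,h}$ bounds on the grid. However, there is a genuine gap precisely in what you call the ``heart of the argument'', the regularity bookkeeping. You assert only that each $\cL^{(j)}$ is a differential operator of order at most $j+2$. With that crude bound your recursion does not close under the hypothesis $m>2k+3+d/p$: the operator $\cL^{(1)}$ would have order $3$, so the term $\binom{j}{1}\cL^{(1)}u^{(j-1)}$ in the equation for $u^{(j)}$ costs three derivatives per level, giving $u^{(j)}\in W^{m-3j}_p$ by induction; the free term of the remainder equation is then controlled only in $W^{m-3k-3}_p$, and the final Sobolev embedding would require $m>3k+3+d/p$, strictly stronger than the stated hypothesis. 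Your closing claim that a ``fixed cost per step'' is ``precisely what forces $m>2k+3+d/p$'' is therefore unsubstantiated as written, and with the orders you state it is false.

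The missing idea is the parity refinement that the paper establishes in \eqref{odd}--\eqref{even}: the symmetric second difference $\delta_{-h,\lambda}\delta_{h,\lambda}$ has a Taylor expansion containing only \emph{even} powers of $h$ (the coefficients $B_i$ vanish for odd $i$), while the first difference $\delta_{h,\gamma}$ contributes $\partial_\gamma^{i+1}$ at order $h^i$ and the shift $T_{h,\gamma}$ contributes $\partial_\gamma^{i}$. Consequently $\mathfrak{L}^{(i)}$ has order $i+1$ for odd $i$ and $i+2$ for even $i$, and this is exactly what makes every forcing term $\mathfrak{L}^{(l)}u^{(j-l)}$ in \eqref{May13.0} land in $W^{m-2j}_p$, i.e.\ a loss of only \emph{two} derivatives per level (Theorem \ref{theorem system}). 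With this, the free term $F^h$ of the remainder equation is bounded, uniformly in $h$ and with the factor $h^{k+1}$, in $W^{m-2k-3}_p$ via \eqref{extrapolation-estimate} --- not in $W^m_p$ as you claim (that uniform $W^m_p$ bound is unattainable, since already $u^{(k)}$ only lies in $W^{m-2k}_p$); Theorem \ref{theorem-Apr9} is then applied at regularity level $m-2k-3$, and the embedding step uses exactly $m-2k-3>d/p$. A secondary, fixable omission: you must still identify the restriction to $\bG_h$ of the whole-space solution with the $l_{p,h}$-valued solution $u^h$ of \eqref{differenceeq}-\eqref{differenceeq-initial} appearing in the statement; the paper does this by testing against the mollifiers $\varphi^{(n)}(z)=n^d\varphi(n(z-x))$, passing to the limit using continuity in $x$, and invoking uniqueness of the grid SDE in $l_{p,h}$.
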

Once we have the expansion above, 
we can use Richardson extrapolation to improve the rate of convergence. 
For a given $k$ set
\begin{equation}\label{may30b}
(c_0,c_1,\ldots,c_k)=(1,0,0,\ldots,0)V^{-1},
\end{equation}
where $V$ denotes the $(k+1)\times(k+1)$ 
Vandermonde matrix $V=(V^{ij})=(2^{-(i-1)(j-1)})$, and define
$$v^h=\sum_{i=0}^{k}c_iu^{h_i},$$
where 
$h_i=h/2^i$.

\begin{theorem}\label{Thm2}
Let $k\geq0$ be an integer and 
let Assumptions \ref{AsLambda} through \ref{As2} 
hold with $m>2k+3+d/p$. 
Then for every $q>0$ we have
$$
\E\sup_{t\in[0,T]}\sup_{x\in \bG_h}|u_t(x)-v^h_t(x)|^q
+\E\sup_{t\in[0,T]}|u_t-v^h_t|^q_{l_{p,h}}
$$
$$
\leq N(\E|\psi|_{W^m_p}^q+\E F_{m,p}^q(T)+\E G_{m,p}^q(T))
$$
with $N=N(K,T,m,k,p,q,d,|\Lambda|).$
\end{theorem}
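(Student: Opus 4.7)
The plan is to substitute the expansion from Theorem \ref{Thm1} into the definition of $v^h$ and to exploit the Vandermonde choice of weights $c_i$ to cancel all intermediate terms, reducing the analysis to a finite linear combination of remainder fields already controlled by Theorem \ref{Thm1}.

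First I would apply Theorem \ref{Thm1} with $h$ replaced by $h_i=h/2^i$ for each $i=0,1,\ldots,k$. Since the random fields $u^{(j)}$ are independent of the mesh parameter, this yields
\begin{equation*}
u^{h_i}_t(x)=\sum_{j=0}^k\frac{h^j\,2^{-ij}}{j!}\,u^{(j)}_t(x)+h^{k+1}\,2^{-i(k+1)}\,r^{h_i}_t(x)
\end{equation*}
for $t\in[0,T]$ and $x\in\bG_{h_i}$. One checks that $\bG_h\subset\bG_{h_i}$: for any $\lambda\in\Lambda$ the point $h\lambda=h_i(2^i\lambda)$ is a sum of $2^i$ copies of $h_i\lambda$, so the expansions above are meaningful on $\bG_h$.

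Next I would substitute into $v^h=\sum_{i=0}^k c_i u^{h_i}$ and swap the order of summation. The defining relation $(c_0,\ldots,c_k)V=(1,0,\ldots,0)$ from \eqref{may30b} states precisely that $\sum_{i=0}^k c_i\,2^{-ij}=\delta_{j,0}$ for $j=0,\ldots,k$. Together with $u^{(0)}=u$, the coefficients of $u^{(j)}$ for $j=1,\ldots,k$ vanish, leaving
\begin{equation*}
u_t(x)-v^h_t(x)=-h^{k+1}\sum_{i=0}^k c_i\,2^{-i(k+1)}\,r^{h_i}_t(x),\qquad t\in[0,T],\;x\in\bG_h.
\end{equation*}

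Finally I would estimate the right-hand side in the two norms appearing in the conclusion. For the supremum over $\bG_h$, each $r^{h_i}$ is continuous on $\R^d$ and Theorem \ref{Thm1} bounds $\E\sup_t\sup_x|r^{h_i}_t(x)|^q$ by the stated quantity, with a constant independent of the mesh parameter. For the $|\cdot|_{l_{p,h}}$-norm, since $\bG_h\subset\bG_{h_i}$ one has the elementary majorisation $|\phi|_{l_{p,h}}\leq 2^{id/p}|\phi|_{l_{p,h_i}}$ for any function $\phi$ on $\bG_{h_i}$, and Theorem \ref{Thm1}'s $l_{p,h_i}$-bound on $r^{h_i}$ then transfers to an $l_{p,h}$-bound. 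Combining these via the elementary inequality $(\sum_{i=0}^k a_i)^q\leq N'\sum_{i=0}^k a_i^q$ (with $N'$ depending only on $k,q$) and absorbing the finitely many $h$-independent constants $|c_i|\,2^{-i(k+1)}\,2^{id/p}$ into $N$ gives the estimate. No real obstacle arises: the analytical substance of Richardson acceleration is entirely packaged into the expansion \eqref{expansion}, and Theorem \ref{Thm2} is reduced to the algebraic cancellation forced by the Vandermonde system together with the grid-inclusion bookkeeping above.
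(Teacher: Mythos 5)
Your proof is correct and is precisely the ``standard calculation'' the paper invokes when it states that Theorem \ref{Thm2} follows from Theorem \ref{Thm1}: expand each $u^{h_i}$ via \eqref{expansion}, cancel the intermediate fields $u^{(1)},\ldots,u^{(k)}$ through the Vandermonde relation $\sum_i c_i 2^{-ij}=\delta_{j0}$, and bound the resulting combination of remainders using the $h$-uniform estimate of Theorem \ref{Thm1} together with the inclusion $\bG_h\subset\bG_{h_i}$. Note that your argument in fact delivers the stronger estimate carrying the factor $h^{q(k+1)}$ on the right-hand side, which is what is actually needed for the Borel--Cantelli argument behind Theorem \ref{Thm3}.
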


\begin{theorem}                                                                                            \label{Thm3}
Let $(h_n)_{n=1}^{\infty}\in l_q$ be a nonnegative sequence for some $q\geq 1$. 
Let $k\geq0$ be an integer and let Assumptions \ref{AsLambda} through \ref{As2} 
hold with $m>2k+3+d/p$. 
Then for every $\varepsilon>0$ there exists a random variable $\xi_{\varepsilon}$ 
such that almost surely
$$
\sup_{t\in[0,T]}\sup_{x\in \bG_h}
|u_t(x)-v_t^h(x)|\leq\xi_{\varepsilon}h^{k+1-\varepsilon}
$$
for $h=h_n$.
\end{theorem}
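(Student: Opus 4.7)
The strategy is to deduce Theorem \ref{Thm3} from the error expansion in Theorem \ref{Thm1} by Chebyshev's inequality and the Borel--Cantelli lemma, together with a standard localization in $\omega$ to address the fact that Assumption \ref{As2} only provides almost-sure, rather than integrable, finiteness of the data norms. Setting
$$
Y_n:=\sup_{t\in[0,T]}\sup_{x\in\bG_{h_n}}|u_t(x)-v^{h_n}_t(x)|,
$$
the conclusion is equivalent to the almost sure finiteness of $\xi_\varepsilon:=\sup_{n\ge 1}Y_n h_n^{-(k+1-\varepsilon)}$, since the bound $Y_n\le\xi_\varepsilon h_n^{k+1-\varepsilon}$ then holds by definition.

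First I would extract an $L^{q'}$ error rate from \eqref{expansion}. Substituting the expansion $u^{h_i}_t(x)=\sum_{j=0}^k (h_i^j/j!)u^{(j)}_t(x)+h_i^{k+1}r_t^{h_i}(x)$ with $h_i=h/2^i$ into $v^h=\sum_i c_iu^{h_i}$, and using the defining identity $\sum_{i=0}^k c_i 2^{-ij}=\delta_{j,0}$ for the Vandermonde coefficients from \eqref{may30b}, all terms of orders $h^0,h^1,\ldots,h^k$ cancel and I am left with
$$
v^h_t(x)-u_t(x)=h^{k+1}\sum_{i=0}^k c_i\,2^{-i(k+1)}\,r^{h_i}_t(x).
$$
Since $\bG_h\subset\bG_{h_i}$ for $0\le i\le k$, the pointwise bound on $r^{h_i}$ given by Theorem \ref{Thm1} (applied with any $q'>0$) yields
$$
\E Y_n^{q'}\le N h_n^{q'(k+1)}\bigl(\E|\psi|_{W^m_p}^{q'}+\E F_{m,p}^{q'}(T)+\E G_{m,p}^{q'}(T)\bigr).
$$

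Second, to accommodate possibly infinite moments on the right, I would truncate the data: for $R>0$ set $B_R:=\{|\psi|_{W^m_p}\le R\}\in\cF_0$ and $\tau_R:=\inf\{t\ge0:F_{m,p}(t)\vee G_{m,p}(t)>R\}\wedge T$, and consider the problems \eqref{SPDE}-\eqref{SPDE-initial} and \eqref{differenceeq}-\eqref{differenceeq-initial} with data $(\psi\mathbf{1}_{B_R},f\mathbf{1}_{t\le\tau_R},g\mathbf{1}_{t\le\tau_R})$; denote the corresponding quantities by a superscript $(R)$. Uniqueness of the solutions of the SPDE (Theorem \ref{theorem 1}) and of the linear scheme, together with the fact that Richardson's combination is linear in the $u^{h_i}$, yield $Y_n^{(R)}=Y_n$ on
$$
\Omega_R:=B_R\cap\{\tau_R=T\},\qquad P(\Omega_R)\uparrow 1\text{ as }R\to\infty.
$$
The bound of the previous paragraph, applied to the truncated data (whose $L^{q'}$ norms are bounded by constants depending only on $R$), gives $\E(Y_n^{(R)})^{q'}\le N(R)\,h_n^{q'(k+1)}$.

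Finally, fix $q'\ge q/\varepsilon$. Chebyshev's inequality gives
$$
P\bigl(\{Y_n>h_n^{k+1-\varepsilon}\}\cap\Omega_R\bigr)\le P\bigl(Y_n^{(R)}>h_n^{k+1-\varepsilon}\bigr)\le N(R)\,h_n^{q'\varepsilon},
$$
and since $h_n\to 0$ with $(h_n)\in l_q$, eventually $h_n\le 1$ so that $h_n^{q'\varepsilon}\le h_n^q$; hence these probabilities are summable. By Borel--Cantelli, for each $R$ we have almost surely on $\Omega_R$ that $Y_n\le h_n^{k+1-\varepsilon}$ for all $n$ large enough, and the finite collection of remaining ratios $Y_n/h_n^{k+1-\varepsilon}$ is a.s. finite; thus $\xi_\varepsilon<\infty$ on $\Omega_R$. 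Letting $R\to\infty$ exhausts a set of full measure and proves the claim. I expect the main technical point to be the localization step, namely the verification that $v^{h,(R)}=v^h$ and $u^{(R)}=u$ on $\Omega_R$ and that all hypotheses of Theorem \ref{Thm1} are preserved under truncation; these are routine since only $\psi,f,g$ are modified while the coefficients of \eqref{SPDE} and \eqref{differenceeq} remain intact.
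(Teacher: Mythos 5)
Your proof is correct and follows essentially the same route as the paper: the paper derives Theorem \ref{Thm2} from the expansion in Theorem \ref{Thm1} via the Vandermonde cancellation and then obtains Theorem \ref{Thm3} by a ``standard application of the Borel--Cantelli Lemma,'' deferring details to \cite{Gy0}. Your write-up fills in exactly those standard details --- Chebyshev's inequality with $q'\geq q/\varepsilon$, and the localization of $(\psi,f,g)$ needed because Assumption \ref{As2} only gives almost sure (not moment) finiteness --- which is the argument the paper has in mind.
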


\begin{remark}
We can use $h_i=h/n_i$, $i=1\ldots k$, with any set of different integers 
$n_i$, with $n_1=1$. Then changing the matrix 
$V$ to $\tilde{V}=(\tilde{V}^{ij})=(n_i^{-j+1})$ in \eqref{may30b}, 
Theorems \ref{Thm2}-\ref{Thm3} remain valid. 
The choice $n_i=i$, for example, yields a more coarse grid, 
and can reduce computation time.
\end{remark}

Choosing $p$ large enough, in some cases one can get rid of the term $d/p$ 
in the conditions of the theorems above, thus obtaining dimension-invariant conditions. 
To this end, first denote the function 
$\rho_s(x)=1/(1+|x|^2)^{s/2}$
 defined on $\R^d$ for all $s\geq0$. 
We say that a function $F$ on $\R^d$ has polynomial growth of order 
$s$ if the $L_{\infty}$ norm of $F\rho_s$ is finite. 
For any integer $m\geq0$, the set of functions on $\bR^d$ 
which have 
polynomial growth of order $s$ and whose 
derivatives up to order $m$ are functions and have polynomial growth 
of order $s$ is denoted by $P^m_s$, and its equipped with the norm
$$
\|F\|_{P^m_s}=|F\rho_s|_{W^m_{\infty}}<\infty.
$$
The similar space is defined for $l_2$-valued functions and is denoted by 
$P^m_s(l_2)$. Note that for any integers $m>k\geq0$, if $F\in P^m_s$, 
then its partial derivatives up to order $k$ exist in the classical sense 
and along with $F$ are continuous functions. 
The polynomial growth property of order $s$ for functions on 
$\bG_h$ can also be defined analogously, 
the set of such functions is denoted by $P_{h,s}$.

Let $s\geq0$ and $m$ be a nonnegative integer. Consider again the equation
$$
du_t(x)=(D_{i}a_t^{ij}(x) D_{j}u_t(x)
+b_t^{i}(x)D_{j}u_t(x)+c_t(x)u_t(x)+f_t(x))\,dt
$$
\begin{equation}                                                                           \label{SPDE-poly}
+(\nu^r_t(x)u_t(x)+g^r(x))\,dw^r_t
\end{equation}
for $(t,x)\in[0,T]\times\R^d$, with the initial condition
\begin{equation}\label{SPDE-poly-initial}
u_0(x)=\psi(x)\;\;x\in\R^d,
\end{equation}
where we keep all our measurability conditions from 
\eqref{SPDE}-\eqref{SPDE-initial}. 
However, instead of the integrability conditions on $\psi, f_t, g_t$, we 
now assume the following. 

\begin{assumption}                                                                                    \label{As5}
The initial data $\psi$ is an  
$\mathcal{F}_0\times\mathcal{B}(\R^d)$-measurable 
mapping from $\Omega\times\R^d$ to $\R$, 
such that $\psi\in P^m_s$ (a.s.). 
The free data $f$ and $g$ are 
$\mathcal{P}\times\mathcal{B}(\R^d)$-measurable mappings 
from $\Omega\times[0,T]\times\R^d$ to $\R$ and $l_2$, respectively. 
Moreover, almost surely $(f_t)$ is a $P^m_s$-valued process and  
$(g_t)$ is a $P^m_s(l_2)$-valued process, such that
$$
\big|\|f_t\|_{{P^m_s}}+\|g_t\|_{P^m_s(l_2)}\big|_{L_{\infty}[0,T]}<\infty.
$$
\end{assumption}

\begin{definition}
A $\mathcal{P}\times\mathcal{B}(\R^d)$-measurable 
mapping $u$ from 
$\Omega\times[0,T]\times\R^d$ to $\R$ 
such that $(u_t)_{t\in[0,T]}$ is almost surely a $P^1_s$-valued bounded process, 
is called a classical solution of \eqref{SPDE-poly}-\eqref{SPDE-poly-initial} 
on $[0,T]$,  
if almost surely $u$ and its first and second order partial derivatives in 
$x$ are continuous functions of 
$(t,x)\in[0,T]\times\R^d$, and almost surely 
$$
u_t(x)=\psi(x)+\int_0^t
[
D_{i}(a_s^{ij}(x) D_{j}u_s(x))
+b_s^{i}(x)D_{j}u_s(x)+c_s(x)u_s(x)+f_s(x)
]\,ds
$$
$$
+\int_0^t
[
\nu^r_s(x)u_s(x)+g^r_s(x)
]
\,dw^r_s
$$
for all $(t,x)\in[0,T]\times\bR^d$ 
for a suitable modification of the stochastic integral 
in the right-hand side of the equation. 
\end{definition}

 If $m\geq 1$, then as noted above the initial condition 
 and free terms are continuous in space. 
 This makes it reasonable to consider the finite difference scheme 
 \eqref{differenceeq}-\eqref{differenceeq-initial} as an approximation 
 for the problem \eqref{SPDE-poly}-\eqref{SPDE-poly-initial}. 

\begin{theorem}                                                                         \label{Thm4}
Let $k\geq0$ be integer, 
and let $\overline{s}>s\geq0$ be real numbers. 
Suppose that  Assumptions \ref{AsLambda} \ref{As0}, \ref{As1}, 
and \ref{As5} hold with $m>2k+3$.

\begin{enumerate}[(i)]  
\item Equation \eqref{SPDE-poly}-\eqref{SPDE-poly-initial} 
admits a unique $P^{m-1}_{\overline{s}}$-valued 
classical solution $(u_t)_{t\in[0,T]}$. 

\item For fixed $h$ the corresponding finite difference equation 
\eqref{differenceeq}-\eqref{differenceeq-initial} admits a unique 
$P_{h,\overline{s}}$-valued solution $(u^h_t)_{t\in[0,T]}$.

\item Suppose furthermore 
$\mathfrak{p}_h^{\gamma}\geq \kappa$ for $\gamma\in\Lambda_1$, 
for some constant $\kappa>0$, and
$$\Lambda_0\cup-\Lambda_0\subset\Lambda_1.$$ 
Then there are continuous random fields $u^{(1)},\ldots u^{(k)}$ on 
$[0,T]\times \R^d$, independent of $h$, such that almost surely
$$
u^h_t=\sum_{j=0}^k\frac{h^j}{j!}u^{(j)}_t(x)+h^{k+1}r_t^h(x)
$$
for $t\in[0,T]$ and $x\in \bG_h$, 
where $u^{(0)}=u$, $r^h$ is a continuous random field on 
$[0,T]\times \R^d$, and for any $q>0$ 
$$
\E\sup_{t\in[0,T]}\sup_{x\in \bG_h}|r_t^h(x)
\rho_{\overline{s}}(x)|^q
+\E\sup_{t\in[0,T]}|r_t^h\rho_{\overline{s}}|^q_{l_{p,h}}
$$
$$
\leq N\left(\E\|\psi\|_{P^m_s}^q
+\E\big|\|f_t\|_{{P^m_s}}+\|g_t\|_{P^m_s(l_2)}\big|_{L_{\infty}[0,T]}^q\right)
$$
with some $N=N(K,T,m,s,\overline{s},q,d,|\Lambda|,\kappa).$

\item Let $(h_n)_{n=1}^{\infty}\in l_q$ be a 
nonnegative sequence for some $q\geq 1$. 
Then for every $\varepsilon, M>0$ there 
exists a random variable $\xi_{\varepsilon,M}$ 
such that almost surely
$$\sup_{t\in[0,T]}\sup_{x\in \bG_h,|x|\leq M}|u_t(x)-v_t^h(x)|
\leq\xi_{\varepsilon,M}h^{k+1-\varepsilon}$$
for $h=h_n$.
\end{enumerate}
\end{theorem}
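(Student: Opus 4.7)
The plan is to reduce this polynomial-growth problem to the $W^m_p$-framework of Theorems \ref{theorem 1} and \ref{Thm1}--\ref{Thm3} by conjugating with the weight $\rho_{\overline{s}}$ and choosing the integrability exponent $p$ large enough to make the $d/p$ term negligible. I fix $p\geq2$ with $p(\overline{s}-s)>d$ and $m>2k+3+d/p$. Then $\rho_{\overline{s}-s}\in L_p(\R^d)$, so multiplication by $\rho_{\overline{s}}$ embeds $P^m_s$ continuously into $W^m_p$ and, by the discrete analogue of \eqref{embedding} together with Assumption \ref{AsLambda}, embeds $P_{h,s}$ continuously into $l_{p,h}$ with constants independent of $h$. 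Conversely, Sobolev embedding applied to $\rho_{\overline{s}}v\in W^{m-1}_p$ produces polynomial-growth estimates of order $\overline{s}$ on $v$ and its classical derivatives.

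For parts (i) and (ii) I set $\tilde u_t=\rho_{\overline{s}} u_t$ and $\tilde u^h_t=\rho_{\overline{s}} u^h_t$. Because $\rho_{\overline{s}}$ and every ratio $\rho_{\overline{s}}^{-1}D^{\alpha}\rho_{\overline{s}}$ are smooth and bounded, $\tilde u$ satisfies an SPDE of the form \eqref{SPDE} with $\mu=0$ whose coefficients remain bounded with bounded derivatives up to the required order and whose data $\rho_{\overline{s}}\psi$, $\rho_{\overline{s}}f$, $\rho_{\overline{s}}g$ lie in $W^m_p$, $W^m_p$, $W^m_p(l_2)$. Theorem \ref{theorem 1} then yields a unique $W^m_p$-valued solution $\tilde u$, and dividing by $\rho_{\overline{s}}$ together with Sobolev embedding ($m-1>2+d/p$) gives the classical $P^{m-1}_{\overline{s}}$-valued solution of (i). The analogous finite difference computation based on
$$
\delta_{\pm h,\lambda}(\rho_{\overline{s}}^{-1}\tilde u^h)(x)
=\rho_{\overline{s}}^{-1}(x\pm h\lambda)\,\delta_{\pm h,\lambda}\tilde u^h(x)
+(\delta_{\pm h,\lambda}\rho_{\overline{s}}^{-1})(x)\,T_{\pm h,\lambda}\tilde u^h(x)
$$
shows that $\tilde u^h$ satisfies a finite difference equation on $\bG_h$ with bounded smooth coefficients; the standard well-posedness in $l_{p,h}$ then produces the $P_{h,\overline{s}}$-valued solution of (ii).

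For (iii) and (iv) I apply Theorems \ref{Thm1}--\ref{Thm3} to $\tilde u^h$ and $\tilde u$. The delicate step is to verify that the conjugated finite difference scheme still satisfies the structural Assumptions \ref{AsLambda}--\ref{As2} that these theorems require. After conjugation the operator $L^h$ acquires extra first-order differences and extra shift operators, all supported on $\Lambda_0\cup -\Lambda_0$. The hypothesis $\Lambda_0\cup -\Lambda_0\subset\Lambda_1$ allows these new first-order contributions to be absorbed into the existing $\mathfrak{p}_h^\gamma$, and the strict lower bound $\mathfrak{p}_h^\gamma\geq\kappa$ together with the freedom to re-partition mass between the $\mathfrak{p}_h^\gamma$ and $\mathfrak{c}_h^\gamma$ keeps the adjusted coefficients non-negative. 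Once this is in place, Theorem \ref{Thm1} produces the expansion
$$
\tilde u^h_t(x)=\sum_{j=0}^k\tfrac{h^j}{j!}\tilde u^{(j)}_t(x)+h^{k+1}\tilde r^h_t(x),
$$
whose remainder is controlled in $l_{p,h}$ and in the sup norm by the $W^m_p$-norms of the weighted data, hence by the $P^m_s$-norms of the original data. Passing to the unweighted quantities $u^{(j)}=\rho_{\overline{s}}^{-1}\tilde u^{(j)}$ and $r^h=\rho_{\overline{s}}^{-1}\tilde r^h$ gives (iii), and Theorem \ref{Thm3} applied in the same way yields (iv); the restriction $|x|\leq M$ arises because the sup bound on $\rho_{\overline{s}} r^h$ translates to a sup bound on $r^h$ only after multiplication by the locally bounded factor $\rho_{\overline{s}}^{-1}$.

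The main obstacle I expect is precisely this structural verification: one must check that every term produced by the algebraic expansion of $\rho_{\overline{s}}L^h\rho_{\overline{s}}^{-1}$ can be recast as $\sum_{\lambda\in\Lambda_0}\delta_{-h,\lambda}(\tilde{\mathfrak{a}}_h^\lambda\delta_{h,\lambda})+\sum_{\gamma\in\Lambda_1}\tilde{\mathfrak{p}}_h^\gamma\delta_{h,\gamma}+\sum_{\gamma\in\Lambda_1}\tilde{\mathfrak{c}}_h^\gamma T_{h,\gamma}$ with coefficients that are bounded, smooth in $(h,x)$ up to order $\lceil m\rceil+1$, non-negative where required, and continuous in $h$. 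The additional hypotheses $\mathfrak{p}_h^\gamma\geq\kappa$ and $\Lambda_0\cup -\Lambda_0\subset\Lambda_1$ imposed in (iii) are exactly what makes this recasting possible.
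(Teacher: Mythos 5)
Your overall strategy (conjugate by a weight, reduce to Theorems \ref{theorem 1} and \ref{Thm1}--\ref{Thm3} with $p$ large, then divide the weight back out) is exactly the paper's strategy, and parts (i) and (ii) go through as you describe. But there is a genuine gap at the step you yourself flag as delicate: nonnegativity of the conjugated first-order coefficients. Conjugating $L^h$ with the \emph{fixed} weight $\rho_{\overline{s}}$ perturbs $\mathfrak{p}_h^{\gamma}$ by terms of the form $\bigl((T_{h,-\lambda}\mathfrak{a}^{\lambda}_h)\,\delta_{h,-\lambda}\rho_{\overline{s}}-(T_{h,\lambda}\mathfrak{a}^{-\lambda}_h)\,\delta_{h,\lambda}\rho_{\overline{s}}\bigr)/\rho_{\overline{s}}$, whose size is of order $K\,\overline{s}\,|\lambda|$ --- an $O(1)$ quantity determined by the data, \emph{not} small. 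The hypothesis only gives $\mathfrak{p}_h^{\gamma}\geq\kappa$ for some (possibly tiny) $\kappa>0$, so the perturbed coefficient $\hat{\mathfrak{p}}_h^{\gamma}$ can well be negative, and Theorems \ref{Thm1}--\ref{Thm3} (which need \eqref{nonnegativity} through Lemma \ref{lemma-estimate}(ii)) do not apply. Your proposed repair --- ``re-partitioning mass between $\mathfrak{p}_h^{\gamma}$ and $\mathfrak{c}_h^{\gamma}$'' --- cannot work: the only way to trade a first-order difference for shifts is the identity $\delta_{h,\gamma}=h^{-1}(T_{h,\gamma}-\mathrm{Id})$, which produces coefficients of size $1/h$, destroying the uniform boundedness, the smoothness in $(h,x)$, and the continuity at $h=0$ required by Assumption \ref{As1}; conversely, moving $\mathfrak{c}$-mass into $\mathfrak{p}$ via $T_{h,\gamma}=\mathrm{Id}+h\delta_{h,\gamma}$ only shifts $\mathfrak{p}$ by $O(h)$ terms of uncontrolled sign, which cannot compensate an $O(1)$ negative part.

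The missing idea, which is the one real trick in the paper's proof, is to dilate the weight: take $\rho(x)=\rho_{\overline{s}}(\epsilon x)=(1+|\epsilon x|^2)^{-\overline{s}/2}$ with $\epsilon>0$ a free parameter. Since $|\partial_{\lambda}\rho|/\rho=O(\epsilon)$ uniformly in $x$, one has $|\delta_{h,\lambda}\rho/\rho|$ arbitrarily small uniformly in $x$, $\lambda\in\Lambda$, $|h|<1$, so choosing $\epsilon$ small (depending on $\kappa$, $K$, $\overline{s}$, $|\Lambda|$ --- which is why $N$ in the statement depends on $\kappa$) gives $\hat{\mathfrak{p}}_h^{\gamma}\geq\kappa-O(\epsilon)\geq0$, while $\rho_{\overline{s}}(\epsilon\,\cdot)$ is comparable to $\rho_{\overline{s}}$ up to $\epsilon$-dependent constants, so all the weighted estimates transfer back to $\rho_{\overline{s}}$. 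With this modification the rest of your argument (absorbing the new differences into $\Lambda_1$ via $\Lambda_0\cup-\Lambda_0\subset\Lambda_1$, applying Theorems \ref{Thm1} and \ref{Thm3}, and undoing the weight on a bounded set for (iv)) is sound and coincides with the paper's proof.
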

This theorem will be proved in Section \ref{section-proof}. 

\begin{remark}
Condition $\mathfrak{p}_h^{\gamma}\geq c$ in 
assertion (iii) of  the above theorem is harmless, similarly to the second part of \eqref{nonnegativity}. As seen in Examples \ref{example} 
and \ref{example2}, we can always 
satisfy this additional requirement by adding a sufficiently large 
constant to $\mathfrak{p}_h^{\gamma}$. 
\end{remark}

\mysection{Estimate on the finite difference scheme}                             \label{section-estimate}
First let us collect some properties of the finite difference operators. 
Throughout this section we consider a fixed $h>0$ 
and use the notation 
$u_{\alpha}=D^{\alpha}u$. It is easy to see that, 
analogously to the integration by parts,
\begin{equation}                                                                                \label{May 10.0}
\int_{\R^d}v(\delta_{h,\lambda}u)\,dx
=\int_{\R^d}(\delta_{h,-\lambda}v)u\,dx
=-\int_{\R^d}(\delta_{-h,\lambda}v)u\,dx,
\end{equation}

when $v\in L_{q/{q-1}}$ and $u\in L_q$ for some $1\leq q\leq\infty$, 
with the convention $1/0=\infty$ and $\infty/(\infty-1)=1$. 
The discrete analogue of the Leibniz rule can be written as
\begin{equation}                                                                                    \label{Leibniz}
\delta_{h,\lambda}(uv)
=u(\delta_{h,\lambda}v)+(\delta_{h,\lambda}u)(T_{h,\lambda}v). 
\end{equation}

Finally, we will also make use of the simple identities
\begin{equation}                                                               \label{Apr 18}
T_{h,\alpha}\delta_{h,\beta}u=\delta_{h,\alpha+\beta}u-\delta_{h,\alpha}u,
\end{equation}
\begin{equation}                                                                                   \label{Mar13.0}
vv_{\lambda}=(1/2)(\delta_{\lambda}(v^2)-h(\delta_{\lambda}v)^2)
\end{equation}
 and the estimate 
\begin{equation}                                                                                   \label{v}
|\delta_{h,\lambda}v|_{L_p}
\leq
 |\int_0^1\partial_{\lambda}v(\cdot+\theta h\lambda)\,d\theta |_{L_p}
 \leq |\lambda||v|_{W^1_p}
\end{equation}
valid for $p\in[1,\infty]$ and $v\in W^1_p$, $h\neq0$ and $\lambda\in\bR^d$.

\begin{lemma}                                                                             \label{Lemma-diff}
For any $p\geq2$, $\lambda\in\bR^d$, $h\neq0$ and  
real function $v$ 
on $\R^d$ we can write
$$
\delta_{h,\lambda}(|v|^{p-2}v)
=F^{h,\lambda}_p(v)\delta_{h,\lambda}v,
$$
where $F^{h,\lambda}_p(v)\geq0$, and for $p>2$, $q=p/(p-2)$ 
and for all $v\in L_p(\bR^d)$
 
\begin{equation}                                                                        \label{F}        
|F^{h,\lambda}_p(v)|^q_{L_q}\leq (p-1)^q|v|^p_{L_p}.  
\end{equation}

\end{lemma}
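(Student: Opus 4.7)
The plan is to write the finite difference of the nonlinear map $t\mapsto|t|^{p-2}t$ as an integral along the straight-line segment joining $v(x)$ and $v(x+h\lambda)$, and then estimate the resulting kernel by Jensen's and Minkowski's inequalities.

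\smallskip

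\noindent\textbf{Step 1: Identify the kernel $F^{h,\lambda}_p(v)$.} Since $\phi(t):=|t|^{p-2}t$ is continuously differentiable for $p\ge 2$, with $\phi'(t)=(p-1)|t|^{p-2}\ge 0$, the fundamental theorem of calculus applied along the segment from $v(x)$ to $v(x+h\lambda)=v(x)+h\delta_{h,\lambda}v(x)$ yields
\[
\phi(v(x+h\lambda))-\phi(v(x))=(p-1)\int_0^1|(1-\theta)v(x)+\theta v(x+h\lambda)|^{p-2}\,d\theta\cdot\bigl(v(x+h\lambda)-v(x)\bigr).
\]
Dividing by $h$, I set
\[
F^{h,\lambda}_p(v)(x):=(p-1)\int_0^1|(1-\theta)v(x)+\theta v(x+h\lambda)|^{p-2}\,d\theta,
\]
which is manifestly nonnegative and gives the desired factorisation.

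\smallskip

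\noindent\textbf{Step 2: Prove \eqref{F}.} For $p>2$ we have $q=p/(p-2)\ge 1$, so Jensen's inequality applied to the probability measure $d\theta$ on $[0,1]$ gives
\[
\bigl(F^{h,\lambda}_p(v)(x)\bigr)^q\le(p-1)^q\int_0^1|(1-\theta)v(x)+\theta v(x+h\lambda)|^{(p-2)q}\,d\theta=(p-1)^q\int_0^1|(1-\theta)v(x)+\theta v(x+h\lambda)|^{p}\,d\theta.
\]
Integrating in $x$, applying Fubini, and then using Minkowski's inequality in $L_p(\bR^d)$ together with the translation-invariance of Lebesgue measure, I get
\[
\Bigl(\int_{\bR^d}|(1-\theta)v(x)+\theta v(x+h\lambda)|^{p}\,dx\Bigr)^{1/p}\le(1-\theta)|v|_{L_p}+\theta|v|_{L_p}=|v|_{L_p},
\]
so the $\theta$-integral of the $p$-th powers is bounded by $|v|_{L_p}^p$, producing $|F^{h,\lambda}_p(v)|_{L_q}^q\le(p-1)^q|v|_{L_p}^p$.

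\smallskip

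\noindent\textbf{Anticipated difficulty.} There is essentially no hard step: nonnegativity of the integrand makes $F\ge 0$ automatic, and the only subtlety is keeping track of the exponents $(p-2)q=p$ in Jensen's inequality and applying Minkowski on the $L_p$ side rather than directly on $F$. One should also note that the identity in Step 1 holds pointwise almost everywhere for any measurable $v$ (with the convention $|0|^{p-2}\cdot 0=0$ when $p=2$), so no additional smoothness assumption on $v$ is required beyond $v\in L_p$ for the bound \eqref{F} to make sense.
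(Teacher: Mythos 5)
Your proposal is correct and follows essentially the same route as the paper: the fundamental theorem of calculus along the segment from $v(x)$ to $v(x+h\lambda)$ to identify $F^{h,\lambda}_p(v)$, then Jensen's inequality in $\theta$ with the exponent identity $(p-2)q=p$, and Fubini with translation invariance. The only cosmetic difference is that you bound the convex combination via Minkowski's inequality in $L_p$ where the paper uses the pointwise convexity of $|r|^p$, which amounts to the same estimate.
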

\begin{proof}

The derivative of the function $G(r)=|r|^{p-2}r$ is 
$$
G'(r)=(p-1)|r|^{p-2}\geq0, 
$$ 
so we have
$$
\delta_{h,\lambda}(|v|^{p-2}v)(x)
=(1/h)G((1-\theta)v(x)+\theta v(x+\lambda h))|_{\theta=0}^1
$$
$$
=\int_0^1G'((1-\theta)v(x)+\theta v(x+\lambda h))
\delta_{h,\lambda}v(x)\,d\theta
=F^{h,\lambda}_p(v)\delta_{h,\lambda}v(x).
$$
By Jensen's inequality and the convexity of the function $|r|^p$, 
$$
|F_p^{h,\lambda}(v)|^q_{L_q}
\leq (p-1)^q\int_{\bR^d}\int_0^1
\theta |v(x+\lambda h)|^p+(1-\theta)|v(x+\lambda h)|^p\,d\theta\,dx. 
$$
Hence \eqref{F} follows by Fubini's theorem 
and the shift invariance of the Lebesgue measure. 
\end{proof}

\begin{lemma}                                                                                   \label{lemma-estimate}
Let $m$ be a nonnegative integer, and let $\alpha$ be a multi-index of 
length $m$.  Then 
the following statements hold.
\begin{itemize}
\item[(i)] Let $\mathfrak a$ be a nonnegative function on $\bR^d$ such that its 
generalised derivatives up to order $m+1$ are functions, 
in magnitude bounded by a constant $K$. 
If $m\geq1$ then let the first order generalised derivatives of 
$\sigma:=\sqrt{\mathfrak a}$ be also functions, bounded by $K$. Then 
for $u\in W^m_p$, $p\in[2,\infty)$, $\lambda\in\bR^d$ and $h\neq0$ 
\begin{equation}                                                                           \label{I}
\int_{\bR^d}|D^{\alpha}u|^{p-2}D^{\alpha}u
D^{\alpha}\delta_{-h,\lambda}({\mathfrak a}\delta_{h,\lambda}u)\,dx  
\leq N|u|^p_{W^m_p}. 
\end{equation}
\item[(ii)] Let $\mathfrak p$ be a nonnegative function on $\bR^d$ 
such that its generalised derivatives up to order $m\vee 1$ are functions bounded by $K$. 
Let $p=2^k$ for an integer $k\geq1$. Then for  
$u\in W^m_p$, $\lambda\in\bR^d$ and $h>0$ 
\begin{equation}                                                                         \label{J}
\int_{\bR^d}|D^{\alpha}u|^{p-2}D^{\alpha}u
D^{\alpha}({\mathfrak p}\delta_{h,\lambda}u)\,dx  
\leq N|u|^p_{W^m_p}. 
\end{equation}
\end{itemize}
The constant $N$ in the above estimates depend only 
on $m$, $p$, $d$, $K$ and $|\lambda|$. 
\end{lemma}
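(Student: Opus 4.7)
Both inequalities are proved by the same pattern: using the discrete integration by parts \eqref{May 10.0} and the Leibniz rule \eqref{Leibniz}, I isolate a \emph{main term} with a favourable sign, and show that each of the remaining Leibniz commutators is controlled by $|u|_{W^m_p}^p$ via H\"older's inequality, Lemma \ref{Lemma-diff}, and the bound \eqref{v}. The case $m=0$ is essentially immediate, so the work lies in $m\ge 1$.

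\textbf{Part (i).} Since $D^\alpha$ commutes with $\delta_{\pm h,\lambda}$ and $|D^\alpha u|^{p-2}D^\alpha u\in L_{p/(p-1)}$, \eqref{May 10.0} recasts the integral in \eqref{I} as
\[
-\int_{\bR^d}\delta_{h,\lambda}\bigl(|D^\alpha u|^{p-2}D^\alpha u\bigr)\cdot D^\alpha(\mathfrak{a}\,\delta_{h,\lambda}u)\,dx.
\]
Lemma \ref{Lemma-diff} turns the first factor into $F_p^{h,\lambda}(D^\alpha u)\,\delta_{h,\lambda}D^\alpha u$, while Leibniz expands $D^\alpha(\mathfrak{a}\,\delta_{h,\lambda}u)=\mathfrak{a}\,\delta_{h,\lambda}D^\alpha u+\sum_{0<\beta\le\alpha}\binom{\alpha}{\beta}D^\beta\mathfrak{a}\cdot\delta_{h,\lambda}D^{\alpha-\beta}u$. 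The $\beta=0$ contribution yields
\[
-\int F_p^{h,\lambda}(D^\alpha u)\,\mathfrak{a}\,\bigl(\delta_{h,\lambda}D^\alpha u\bigr)^2\,dx\;\le\;0,
\]
which serves as a dissipative reservoir. For $|\beta|=1$ I write $D_k\mathfrak{a}=2\sigma D_k\sigma$ and apply $2ab\le\varepsilon a^2+\varepsilon^{-1}b^2$ with small $\varepsilon$: the $\sigma^2(\delta_{h,\lambda}D^\alpha u)^2$ piece is absorbed into the reservoir, and the residual $(D_k\sigma)^2(\delta_{h,\lambda}D^{\alpha-e_k}u)^2$ is estimated by H\"older with exponents $p/(p-2)$ and $p/2$ together with Lemma \ref{Lemma-diff} and \eqref{v}. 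For $|\beta|\ge 2$ I apply \eqref{May 10.0} a second time to obtain
\[
J_\beta=-\int|D^\alpha u|^{p-2}D^\alpha u\cdot\delta_{-h,\lambda}\bigl(D^\beta\mathfrak{a}\cdot\delta_{h,\lambda}D^{\alpha-\beta}u\bigr)\,dx,
\]
and expand the inner $\delta_{-h,\lambda}$ by \eqref{Leibniz}; the two resulting terms are controlled in $L_p$ by the second-difference bound $|\delta_{-h,\lambda}\delta_{h,\lambda}v|_{L_p}\le|\lambda|^2|v|_{W^2_p}$ and by \eqref{v}. The constraint $|\alpha-\beta|+2\le m$, which holds exactly because $|\beta|\ge 2$, is what makes this affordable, while the boundedness of derivatives of $\mathfrak{a}$ up to order $m+1$ handles the coefficient factors. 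A final H\"older pairing between $|D^\alpha u|^{p-1}\in L_{p/(p-1)}$ and the $L_p$ estimate closes \eqref{I}.

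\textbf{Part (ii).} After expanding $D^\alpha(\mathfrak{p}\,\delta_{h,\lambda}u)$ by Leibniz, the main $\beta=0$ term reads $\int(D^\alpha u)^{p-1}\mathfrak{p}\,\delta_{h,\lambda}D^\alpha u\,dx$, where $|v|^{p-2}v=v^{p-1}$ is used because $p=2^k$ is even. Iterating \eqref{Mar13.0} a total of $k$ times yields a pointwise identity
\[
v^{p-1}\delta_{h,\lambda}v=\tfrac{1}{p}\,\delta_{h,\lambda}(v^p)-h\,\Phi_h(v),\qquad \Phi_h(v)\ge 0,
\]
so the hypothesis $\mathfrak{p}\ge 0$ lets me drop the $-h\mathfrak{p}\Phi_h(v)$ piece, and \eqref{May 10.0} transforms the remainder into $-\tfrac{1}{p}\int\delta_{-h,\lambda}\mathfrak{p}\cdot(D^\alpha u)^p\,dx$, which is bounded by $K|\lambda||u|_{W^m_p}^p/p$ through \eqref{v}. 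The $|\beta|\ge 1$ commutators $\int(D^\alpha u)^{p-1}D^\beta\mathfrak{p}\cdot\delta_{h,\lambda}D^{\alpha-\beta}u\,dx$ are then immediate from H\"older with exponents $p/(p-1)$ and $p$, the $L_\infty$-bound on $D^\beta\mathfrak{p}$, and \eqref{v} applied to $D^{\alpha-\beta}u\in W^1_p$.

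\textbf{Main obstacle.} The delicate step is the $|\beta|\ge 2$ block in (i). The factor $\delta_{h,\lambda}D^\alpha u$ costs one derivative more than the $W^m_p$-hypothesis on $u$ supplies, so it cannot be estimated in $L_p$ directly, and it cannot be absorbed into the reservoir via Young's inequality (as for $|\beta|=1$) without a bounded $\sqrt{\mathfrak{a}}$-type multiplier that is not available. The second integration by parts trades this unavailable bound for a second-order difference on $D^{\alpha-\beta}u$, which is affordable precisely when $|\beta|\ge 2$. This is why the lemma requires boundedness of derivatives of $\mathfrak{a}$ up to order $m+1$, and why the extra hypothesis on $\sqrt{\mathfrak{a}}$ is needed only in the $|\beta|=1$ case, where the second IBP would demand a $W^{m+1}_p$-regularity of $u$ that is not assumed.
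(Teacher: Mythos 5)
Your proof is correct and follows essentially the same route as the paper's: the same negative reservoir $-F_p^{h,\lambda}(u_\alpha)\mathfrak{a}(\delta_{h,\lambda}u_\alpha)^2$ obtained from \eqref{May 10.0} and Lemma \ref{Lemma-diff}, the factorization $D_k\mathfrak{a}=2\sigma D_k\sigma$ with Young's inequality and absorption for the first-order commutators, direct H\"older bounds for the $|\beta|\ge 2$ commutators (your ``second integration by parts'' simply returns those terms to the un-integrated form in which the paper estimates them directly), and for (ii) the iterated identity \eqref{Mar13.0} followed by a final discrete integration by parts. The only blemish is a harmless sign slip in $J_\beta$ (it should carry a plus sign), which is immaterial since you estimate it in absolute value.
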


\begin{proof}
Recall the notation $u_{\alpha}=D^{\alpha}u$. 
For real functions $v$ and $w$ 
defined on $\bR^d$ we write $v\sim w$ 
if their integrals over $\R^d$ are the same.
We use the notation $v\preceq w$ if $v=w+F$ 
with a function $F$ whose 
integral over $\R^d$ can be estimated by $N|u|^p_{W^m_p}$, 
where $N$ is a constant 
depending only on $m$, $K$, $p$, $d$ and $|\lambda|$.  
To prove \eqref{I} we consider first the case 
$m=0$. 
By \eqref{May 10.0} 
and Lemma \ref{Lemma-diff} 
$$
|u|^{p-2}u\delta_{-h\lambda}(\mathfrak{a}\delta_{h,\lambda}u)
\sim
-\delta_{h,\lambda}(|u|^{p-2}u)\mathfrak{a}\delta_{h,\lambda}u
$$
\begin{equation}                                                                               \label{c}
=-F^{h,\lambda}_p(u)\mathfrak{a}(\delta_{h,\lambda}u)^2\leq0,
\end{equation}
where $F$ is the functional obtained from Lemma \ref{Lemma-diff}. 
Consequently, 
\eqref{I} holds for $m=0$. 
Assume now $m\geq1$. Then it is easy to see that 
\begin{equation}                                                                                \label{i}
|D^{\alpha}u|^{p-2}D^{\alpha}u
D^{\alpha}\delta_{-h,\lambda}({\mathfrak a}\delta_{h,\lambda}u)\preceq I_1+I_2, 
\end{equation}
with 
$$
I_1:=|u_{\alpha}|^{p-2}u_{\alpha}\sum_{(\alpha',\alpha'')\in A}
\delta_{-h,\lambda}
D^{\alpha'}\mathfrak{a}D^{\alpha''}\delta_{h,\lambda}u
$$
$$
I_2:=|u_{\alpha}|^{p-2}u_{\alpha}
\delta_{-h,\lambda}(\mathfrak{a}\delta_{h,\lambda}u_{\alpha}), 
$$
where $A$ is the set of ordered pairs of multi-indices $(\alpha',\alpha'')$ such that 
$|\alpha'|=1$ and $\alpha'+\alpha''=\alpha$. 
By \eqref{May 10.0} 
and Lemma \ref{Lemma-diff} 
$$
I_1
\sim-2F^{h,\lambda}_p(u_{\alpha})\sigma\delta_{h,\lambda}u_{\alpha}
\sum_{(\alpha',\alpha'')\in A}D^{\alpha'}\sigma\delta_{h,\lambda}u_{\alpha''}
$$
\begin{equation}                                                                        \label{e1}
\leq 
\varepsilon F^{h,\lambda}_p(u_{\alpha})
{\mathfrak a}(\delta_{h,\lambda}u_{\alpha})^2
+\varepsilon^{-1}dK^2F_p^{h,\lambda}(u_{\alpha})(\delta_{h,\lambda}u_{\alpha''})^2 
\end{equation}
for every $\varepsilon>0$, where the the simple 
inequality $2yz\leq \varepsilon y^2+\varepsilon^{-1}z^2$ is used with 
$
y=\sigma\delta_{h,\lambda}u_{\alpha}
$
and  
$
z=\sum_{(\alpha',\alpha'')\in A}D^{\alpha'}\sigma\delta_{h,\lambda}u_{\alpha''}
$
.
Using \eqref{c} with $u_{\alpha}$ in place of $u$ we get 
\begin{equation*}                                                                          \label{cc}                                                             
I_2
\preceq 
-F_p^{h,\lambda}(u_{\alpha})\mathfrak{a}(\delta_{h,\lambda}u_{\alpha})^2. 
\end{equation*}
Combining this with \eqref{e1} with sufficiently small $\varepsilon$, 
from \eqref{i}
we obtain 
$$
I\preceq NF_p^{h,\lambda}(u_{\alpha})
\sum_{(\alpha',\alpha'')\in A}(\delta^h_{\lambda}u_{\alpha''})^2
$$
$$ 
\leq N|F^{h,\lambda}_p(u_{\alpha})|^{q}
+N|\sum_{(\alpha',\alpha'')\in A}(\delta^h_{\lambda}u_{\alpha''})^2|^{p/2},  
$$
with $q=p/(p-2)$, which gives \eqref{I}, due to the estimates \eqref{F} 
and \eqref{v}. 
 To prove \eqref{J} 
notice that for $p=2^k$ 
$$
J:=|D^{\alpha}u|^{p-2}D^{\alpha}u
D^{\alpha}({\mathfrak p}\delta_{h,\lambda}u)
=
(D^{\alpha}u)^{p-1}
D^{\alpha}({\mathfrak p}\delta_{h,\lambda}u)
$$
$$
\preceq 
(D^{\alpha}u)^{p-1}
{\mathfrak p}\delta_{h,\lambda}u_{\alpha}. 
$$
Hence we can repeatedly use \eqref{Mar13.0} 
and the nonnegativity of $\mathfrak p_h^{\lambda}$ to get
$$
u_{\alpha}^{p-1}\mathfrak{p}\delta_{h,\lambda}u_{\alpha}\leq (1/2) u_{\alpha}^{p-2}\mathfrak{p}
\delta_{h,\lambda}u_{\alpha}^2
$$
$$
\leq (1/4) u_{\alpha}^{p-4}\mathfrak{p}\delta_{h,\lambda}
u_{\alpha}^4\leq\cdots\leq(1/2^k)\mathfrak{p}
\delta_{h,\lambda}u_{\alpha}^{2^k}.
$$
By \eqref{May 10.0}, $\mathfrak{p}\delta_{h,\lambda}u_{\alpha}^{p}$ 
has the same integral over $\R^d$ as 
$\delta_{h,-\lambda}\mathfrak{p}u_{\alpha}^p$, 
and hence \eqref{J} follows, since 
$|\delta_{h,-\lambda}\mathfrak{p}|\leq K|\lambda|$ by \eqref{v}.  
\end{proof}

\begin{corollary}                                                                      \label{proposition-estimate}
Let $m\geq1$ be an integer and $p=2^k$ for some integer $k\geq1$, and  let 
Assumptions \ref{As1} and \ref{As2}, along with the condition \eqref{nonnegativity} 
be satisfied. 
Then for $u\in W_p^m$, 
$f\in W_p^m$, $g\in W_p^{m}(l_2)$  
and for all 
multi-indices $\alpha$  of length $|\alpha|\leq m$ we have
$$
\int_{\R^d}(p-1)|u_{\alpha}|^{p-2}u_{\alpha}(x)D^{\alpha}(L_t^hu(x)+f(x))
$$
$$
+(1/2)(p-1)(p-2)|u_{\alpha}|^{p-2}(x)|D^{\alpha}(\nu^r(x)u(x)+g^r(x))|^2dx
$$
\begin{equation}                                                                            \label{Mar13.1}
\leq N(|u|_{W_p^m}^p+|f|_{W_p^m}^p+|g|_{W_p^m(l_2)}^p)
\end{equation}
for $P\times dt$-almost all $(\omega,t)\in\Omega\times[0,T]$, 
where $N$ is a constant depending only on $d,p, m, |\Lambda|,$ and $K$.
\end{corollary}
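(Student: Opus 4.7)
The plan is to split the left-hand side of \eqref{Mar13.1} according to the three building blocks of $L^h_t$, handling the second-order and first-order finite difference terms with Lemma \ref{lemma-estimate}, and treating the remaining contributions (zero-order shift, free term $f$, and the stochastic square) directly via Leibniz together with Young's inequality. Throughout, I would keep the notation $u_\alpha = D^\alpha u$ and the relation $\sim$ from the proof of Lemma \ref{lemma-estimate}, and work with an arbitrary multi-index $|\alpha|\leq m$.

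First, I would expand
\[
D^\alpha L^h_t u = \sum_{\lambda\in\Lambda_0}D^\alpha\delta_{-h,\lambda}\!\bigl(\mathfrak{a}_h^\lambda\delta_{h,\lambda}u\bigr) + \sum_{\gamma\in\Lambda_1}D^\alpha\!\bigl(\mathfrak{p}_h^\gamma\delta_{h,\gamma}u\bigr) + \sum_{\gamma\in\Lambda_1}D^\alpha\!\bigl(\mathfrak{c}_h^\gamma T_{h,\gamma}u\bigr).
\]
For each $\lambda\in\Lambda_0$, apply Lemma \ref{lemma-estimate}(i) with $\mathfrak{a}=\mathfrak{a}_h^\lambda$; Assumption \ref{As1} gives boundedness of the derivatives of $\mathfrak{a}_h^\lambda$ up to order $\lceil m\rceil+1\geq 2$, and the standard fact that bounded second derivatives of a nonnegative function imply bounded first derivatives of its square root supplies the assumption on $\sigma=\sqrt{\mathfrak{a}_h^\lambda}$. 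For each $\gamma\in\Lambda_1$ and the first-order sum, apply Lemma \ref{lemma-estimate}(ii), which is exactly where the hypothesis $p=2^k$ is used. Both applications yield a bound of the form $N|u|^p_{W^m_p}$ after integration against $(p-1)|u_\alpha|^{p-2}u_\alpha$.

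Next, for the shift-type sum I would apply the Leibniz rule to write
\[
D^\alpha(\mathfrak{c}_h^\gamma T_{h,\gamma}u) = \sum_{\beta\leq\alpha}\binom{\alpha}{\beta}\,(D^\beta\mathfrak{c}_h^\gamma)\,T_{h,\gamma}D^{\alpha-\beta}u,
\]
use boundedness of the derivatives of $\mathfrak{c}_h^\gamma$ from Assumption \ref{As1}, and then estimate each resulting term by $|u_\alpha|^{p-2}u_\alpha (D^\beta\mathfrak{c}_h^\gamma)T_{h,\gamma}D^{\alpha-\beta}u \preceq |u_\alpha|^p + |T_{h,\gamma}D^{\alpha-\beta}u|^p$ via Young's inequality with exponents $p/(p-1)$ and $p$, invoking shift invariance of Lebesgue measure to reduce the shifted term to $|u|^p_{W^m_p}$. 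The contribution from the free term is handled identically: $(p-1)|u_\alpha|^{p-2}u_\alpha D^\alpha f \leq N(|u_\alpha|^p+|D^\alpha f|^p)$. For the stochastic term, write $D^\alpha(\nu^r u + g^r)$ through Leibniz, take the $l_2$-norm squared using Cauchy–Schwarz on the finite index set of Leibniz coefficients, use Assumption \ref{As1} on $\nu$ to absorb the $l_2$-norms of $D^\beta\nu$ into constants, and estimate each term $|u_\alpha|^{p-2}|D^{\alpha-\beta}u|^2$ and $|u_\alpha|^{p-2}|D^\alpha g|_{l_2}^2$ by Young's inequality with conjugate exponents $p/(p-2)$ and $p/2$.

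The main obstacle is essentially bookkeeping: tracking the Leibniz expansion of the stochastic coefficient while preserving the $l_2$ structure, and verifying that the constant in each step depends only on the quantities advertised in the statement. The key conceptual point is that the highest-order, potentially unbounded contributions coming from the dissipative second-order and drift finite-difference operators are captured by the sign information provided by Lemma \ref{lemma-estimate} (i.e., the cancellations using $F^{h,\lambda}_p\geq 0$ and \eqref{Mar13.0}); everything else reduces to bounded-coefficient Leibniz manipulations and Young's inequality, and finally summing over $\lambda\in\Lambda_0$, $\gamma\in\Lambda_1$, and $|\alpha|\leq m$ yields \eqref{Mar13.1} with a constant of the stated form.
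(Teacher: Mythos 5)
Your proposal is correct and follows essentially the same route as the paper's (very terse) proof: Lemma \ref{lemma-estimate}(i) for the $\mathfrak{a}_h^{\lambda}$ terms, Lemma \ref{lemma-estimate}(ii) for the $\mathfrak{p}_h^{\gamma}$ terms (where $p=2^k$ enters), and H\"older/Young-type inequalities with Leibniz for the shift term, the free term, and the stochastic square. Your explicit remark that Assumption \ref{As1} plus nonnegativity yields bounded first derivatives of $\sigma=\sqrt{\mathfrak{a}_h^{\lambda}}$ (so that Lemma \ref{lemma-estimate}(i) indeed applies) fills in a hypothesis the paper leaves implicit.
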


\begin{proof}
Using the notation of the preceding proof, by H\"older's inequality 
$$
u^{p-1}_{\alpha}D^{\alpha}(\mathfrak{c}^{\gamma}_hT_{h,\gamma}u)+u^{p-1}_{\alpha}f_{\alpha}+u^{p-2}_{\alpha}|D^{\alpha}(\nu^ru+g^r)|^2\preceq N(|f|_{W_p^m}^p+|g|_{W_p^m(l_2)}^p).
$$ The remaining two terms are estimated in Lemma \ref{lemma-estimate}.
\end{proof}

The following is a stochastic version of Gronwall's lemma, 
for its proof we refer to \cite{Gy1}.
\begin{lemma}\label{Lemma-Gronwall}
Let $(y_t)_{t\in[0,T]}$, $(F_t)_{t\in[0,T]}$, and $(G_t)_{t\in[0,T]}$ 
be two nonnegative adapted processes, and let $(m_t)_{t\in[0,T]}$ 
be a continuous local martingale such that for a constant $N$ almost surely
$$
dy_t\leq N(y_t+F_t)dt+dm_t
$$
for all $t\in[0,T]$. Assume furthermore that for some $p\geq2$ almost surely
$$d\langle m\rangle_t\leq N(y_t^2+G_ty_t^{2-(2/p)})dt.$$
Then for every $q\geq0$ there exists a constant $C$, depending only on 
$N$, $q$, $p$, and $T$, such that
$$\E\sup_{t\leq T}y_t^q\leq C\E y_0^q+C\E\left(\int_0^TF_tdt\right)^q
+C\E\left(\int_0^TG_t^{p/2}dt\right)^q.
$$

\end{lemma}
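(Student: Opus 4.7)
The plan is the classical localize--BDG--Gronwall scheme, with careful bookkeeping of exponents so that the $G^{p/2}$ appearing in the conclusion is produced at the right place. Since $m$ is only a continuous local martingale, I first reduce to the case of a true martingale and a bounded process $y$ by introducing stopping times $(\tau_n)$ that reduce $m$ and stop $y$ below a constant; the estimate is proved for the stopped processes and then passed to the limit via Fatou's lemma. To keep notation light I describe the argument as if these finiteness properties already held.

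Integrating the differential inequality and setting $y_t^* := \sup_{s \leq t} y_s$, one has $y_t^* \leq y_0 + N\int_0^t (y_s + F_s)\,ds + \sup_{s \leq t}|m_s - m_0|$, so raising to the $q$-th power, taking expectation, and applying the Burkholder--Davis--Gundy inequality (valid for every $q > 0$) yields
\begin{equation*}
\E (y_t^*)^q \leq C\E y_0^q + C\E \Big(\int_0^t y_s\,ds\Big)^q + C\E \Big(\int_0^t F_s\,ds\Big)^q + C\E \Big(\int_0^t y_s^2\,ds + \int_0^t G_s y_s^{2-2/p}\,ds\Big)^{q/2}.
\end{equation*}
The core of the proof is the treatment of the two pieces of the quadratic-variation term. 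For the first, write $\int_0^t y_s^2\,ds \leq y_t^* \int_0^t y_s\,ds$, raise to power $q/2$, and apply Young's inequality $ab \leq \varepsilon a^2 + C_\varepsilon b^2$ to separate $\varepsilon (y_t^*)^q$ from $C_\varepsilon \bigl(\int_0^t y_s\,ds\bigr)^q$; the latter is controlled by $C\int_0^t \E (y_s^*)^q\,ds$ via Jensen. For the second, estimate $y_s^{2-2/p} \leq (y_t^*)^{2-2/p}$ pointwise in $s \leq t$, obtaining after raising to power $q/2$ the bound $(y_t^*)^{(1-1/p)q}\bigl(\int_0^t G_s\,ds\bigr)^{q/2}$; Young's inequality with conjugate exponents $p/(p-1)$ and $p$, chosen precisely so that the absorbable factor carries exponent $q$ on $y_t^*$, then splits this into $\varepsilon(y_t^*)^q + C_\varepsilon \bigl(\int_0^t G_s\,ds\bigr)^{pq/2}$. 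A final application of H\"older's inequality on $[0,T]$ with exponents $p/2$ and $p/(p-2)$ bounds $\bigl(\int_0^t G_s\,ds\bigr)^{p/2}$ by $T^{(p-2)/2}\int_0^t G_s^{p/2}\,ds$, hence $\bigl(\int_0^t G_s\,ds\bigr)^{pq/2} \leq T^{(p-2)q/2}\bigl(\int_0^T G_s^{p/2}\,ds\bigr)^q$, which is exactly the form demanded by the statement.

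Choosing $\varepsilon$ small enough to absorb the two $\varepsilon(y_t^*)^q$ contributions into the left-hand side produces an inequality of the form
\begin{equation*}
\E(y_t^*)^q \leq C_0 + C\int_0^t \E(y_s^*)^q\,ds,
\end{equation*}
where $C_0$ is a linear combination of the three data terms from the statement; deterministic Gronwall then gives the bound, and removing the localization via Fatou completes the proof. The main obstacle is precisely the exponent arithmetic I have just described: the $p/2$ on $G$ in the conclusion is the product of three successive steps (the $q/2$ power coming out of BDG; the Young split with exponents $p/(p-1)$ and $p$, dictated by the exponent $2-2/p$ on $y_s$; and the final H\"older step that moves $p/2$ inside the integral), and aligning these three choices so that the absorbable $(y_t^*)^q$ piece really does have exponent $q$, and not some nearby mismatched exponent, is the only genuinely delicate point. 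The edge cases $0 < q < 2$ (where Minkowski's inequality on the quadratic-variation bound is replaced by subadditivity $|a+b|^r \leq |a|^r + |b|^r$ for $r \leq 1$) and $p=2$ (where $y_s^{2-2/p}=y_s$ and Young's inequality reduces to the $(2,2)$ case, with $G^{p/2}=G$) both fit into the same scheme and introduce no genuinely new ideas.
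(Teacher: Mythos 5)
The paper itself gives no proof of this lemma (it defers to \cite{Gy1}), so your argument can only be judged on its own terms. For $q\geq 1$ it is essentially correct: the localization/Fatou frame is standard, BDG for continuous local martingales indeed holds for all positive moments, and your exponent bookkeeping on the $G$-term --- the Young split with conjugate exponents $p/(p-1)$ and $p$ applied to $(y_t^*)^{(1-1/p)q}\bigl(\int_0^t G_s\,ds\bigr)^{q/2}$, followed by H\"older in time with exponents $p/2$ and $p/(p-2)$ --- is exactly right and produces the term $\bigl(\int_0^T G_t^{p/2}\,dt\bigr)^q$ demanded by the statement.

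The gap is the case $0<q<1$, which the lemma asserts (``for every $q\geq0$'') and which the paper genuinely needs: in the proof of Theorem \ref{theorem-Apr9} the lemma is applied to $|u^h_t|^p_{W^m_p}$ with the power $q/p$, which can be arbitrarily small. Your edge-case remark only addresses the splitting of sums ($|a+b|^r\leq|a|^r+|b|^r$ for $r\leq1$), but that was never the obstacle. What actually breaks for $q<1$ is the Jensen step $\E\bigl(\int_0^t y_s\,ds\bigr)^q\leq T^{q-1}\int_0^t\E(y_s^*)^q\,ds$, which you invoke twice (for the drift term and for the $\int_0^t y_s^2\,ds$ piece of the quadratic variation after the Young split) in order to close the Gronwall inequality for $\phi(t)=\E(y_t^*)^q$; for $q<1$ the map $x\mapsto x^q$ is concave and this inequality is simply not available, so the scheme as described does not close. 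Nor can the small-$q$ case be deduced from the large-$q$ case by H\"older, since that bounds $\E\sup_t y_t^q$ by powers of \emph{higher} moments of the data rather than by $\E y_0^q+\E\bigl(\int_0^T F_t\,dt\bigr)^q+\E\bigl(\int_0^T G_t^{p/2}\,dt\bigr)^q$. The missing ingredient is a domination argument: prove the $q=1$ estimate with an arbitrary stopping time $\tau\leq T$ in place of $T$, observe that this exhibits $\sup_{s\leq t}y_s$ as dominated by the increasing continuous adapted process $A_t=C\bigl(y_0+\int_0^t F_s\,ds+\int_0^t G_s^{p/2}\,ds\bigr)$, and then apply Lenglart's domination inequality --- equivalently Lemma 3.2 of \cite{GK2003}, which is precisely the device this paper uses elsewhere (at the end of the proof of Theorem \ref{theorem-Apr9}, and again in the proof of Theorem \ref{theorem 1}) to pass from $q>1$ to all $q>0$. (Alternatively, for $q<1$ one can use $\bigl(\int_0^t y_s\,ds\bigr)^q\leq t^q(y_t^*)^q$ and absorb this term over sufficiently short time intervals, iterating over a partition of $[0,T]$.) Without one of these devices, your proof establishes the lemma only for $q\geq1$.
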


Consider \eqref{differenceeq} without restricting it to the grid 
$\bG_h$, 
that is,
\begin{equation}                                                                  \label{differenceeq2}
du_t^h=(L_t^hu_t^h+f_t)dt+\nu^r_tu_t^h+g^r_t)dw_t^r, \quad t\in(0,T],\,x\in\bR^d
\end{equation}
with the initial condition
\begin{equation}                                                             \label{differenceeq2-initial}
u_0^h(x)=\psi(x),\quad x\in\bR^d.
\end{equation}
The solution of \eqref{differenceeq2}-\eqref{differenceeq2-initial} 
is understood in the spirit of Definition \ref{definition solution}.
\begin{definition}                                                                    \label{definition FDE}
An $L_p(\bR^d)$-valued continuous adapted process 
$(u^h_t)_{t\in[0,T]}$ is a solution to 
\eqref{differenceeq2}-\eqref{differenceeq2-initial} 
on $[0,\tau]$ for a stopping time $\tau\leq T$ if almost surely 
\begin{equation}
(u^h_t,\varphi)=(\psi,\varphi)+\int_0^t(L^h_su^h_s+f_s,\varphi)\,ds
+\int_0^t(\nu^r_su^h_s+g_s,\varphi)\,dw^r_s
\end{equation}
for all $t\leq\tau$ and $\varphi\in C_0^{\infty}(\bR^d)$. 
\end{definition}

Assumption \ref{As1} implies that the operators $u\rightarrow L_t^h u$ 
and $u\rightarrow (b^r_t u)_{r=1}^{\infty}$ 
are bounded linear operators from $W^m_p$ to $W^m_p$ and to $W^m_p(l_2)$, 
respectively, with operator norm uniformly bounded in $(t,\omega)$. 
Therefore if Assumption \ref{As2} is also satisfied, \eqref{differenceeq2} 
is a SDE in the space $W^m_p$ with Lipschitz continuous coefficients. 
As such, it admits a unique continuous solution.

\begin{theorem}                                                                                        \label{theorem-Apr9}
Let Assumptions \ref{As1} and \ref{As2} 
hold with $m\geq1$, and let condition \eqref{nonnegativity} 
be satisfied. 
Then \eqref{differenceeq2}-\eqref{differenceeq2-initial} 
has a unique continuous 
$W^m_p$-valued solution $(u_t^h)_{t\in[0,T]}$, 
and for each $q>0$ there exists a constant 
$N=N(d,q,p,m,K,T,|\Lambda|)$ such that
\begin{equation}                                                                                            \label{Mar14.0}
\E\sup_{t\leq T}|u_t^h|_{W^m_p}^q\leq N(\E|\psi|_{W^m_p}^q
+\E F_{m,p}^q(T)+\E G_{m,p}^q(T))
\end{equation}
for all $h>0$.
\end{theorem}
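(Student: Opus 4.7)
Existence and uniqueness are essentially already handled by the paragraph preceding the statement: Assumption \ref{As1} implies that $u\mapsto L^h_t u$ is a bounded linear operator on $W^m_p$ and $u\mapsto(\nu^r_t u)_r$ a bounded linear operator from $W^m_p$ into $W^m_p(l_2)$, both with $(\omega,t)$-uniform operator norm; together with Assumption \ref{As2} this turns \eqref{differenceeq2}-\eqref{differenceeq2-initial} into an affine SDE in $W^m_p$ with globally Lipschitz coefficients, so a unique continuous $W^m_p$-valued adapted solution $(u^h_t)$ exists by the standard theory of Banach-space SDEs.

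For the moment estimate the plan is to apply an It\^o formula to
$$y_t := |u^h_t|_{W^m_p}^p = \sum_{|\alpha|\leq m}\int_{\R^d}|D^\alpha u^h_t(x)|^p\,dx.$$
Since Corollary \ref{proposition-estimate} is formulated for $p=2^k$, the $p$-th power functional is then a genuine polynomial in its argument, so a term-by-term It\^o computation (after localising with $\tau_n:=\inf\{t:y_t\geq n\}\wedge T$ to ensure finite moments) gives
$$dy_t = A_t\,dt + dM_t,$$
where $M_t$ is a local martingale and $A_t$ is, up to $p$-dependent constants, exactly the integrand on the left-hand side of \eqref{Mar13.1} summed over $|\alpha|\leq m$. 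Corollary \ref{proposition-estimate} then furnishes the drift bound
$$A_t \leq N\bigl(y_t + \Phi_t\bigr),\qquad \Phi_t := |f_t|^p_{W^m_p}+|g_t|^p_{W^m_p(l_2)}.$$

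For the quadratic variation of $M_t$, whose integrand is $p\,|D^\alpha u^h_t|^{p-2}D^\alpha u^h_t\,D^\alpha(\nu^r_t u^h_t+g^r_t)$ summed over $r$ and $|\alpha|\leq m$, a H\"older estimate with conjugate exponents $p/(p-1)$ and $p$, together with the boundedness of the $x$-derivatives of $\nu$ from Assumption \ref{As1}, yields
$$d\langle M\rangle_t \leq N\bigl(y_t^2 + |g_t|^p_{W^m_p(l_2)}\,y_t^{2-2/p}\bigr)\,dt.$$
Feeding $y_t$, $\Phi_t$, and $|g_t|^p_{W^m_p(l_2)}$ into Lemma \ref{Lemma-Gronwall} (applied with $q/p$ in place of $q$), removing the localisation by Fatou, and raising to the $1/p$-power then produces \eqref{Mar14.0}.

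The step requiring the most care is the rigorous application of It\^o's formula in the Banach space $W^m_p$, together with the term-by-term differentiation of \eqref{differenceeq2} in $x$. The first is tractable precisely because for $p=2^k$ the functional $y_t$ is a polynomial in $u^h$, so one may work with smooth mollifications $u^h\ast\rho_\varepsilon$ (whose dynamics are obtained by convolving \eqref{differenceeq2} with $\rho_\varepsilon$), apply the finite-dimensional It\^o on each compact test set, and pass to the limit using the $W^m_p$-continuity of $u^h$; the second is justified by the boundedness of $\lceil m\rceil+1$ derivatives of the coefficients in Assumption \ref{As1}. Once these points are settled, the rest of the argument is a direct combination of Corollary \ref{proposition-estimate} and the stochastic Gronwall lemma as laid out above.
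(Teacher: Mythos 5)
Your proposal has a genuine gap: it proves the estimate only in the special case $p=2^k$ and $m$ a nonnegative integer, but the theorem claims \eqref{Mar14.0} for the fixed $p\geq 2$ of the problem and for every real $m\geq 1$ (recall $m\in[1,\infty)$ and that $W^m_p$ is a Bessel potential space for non-integer $m$). You correctly note that Corollary \ref{proposition-estimate} is only available for $p=2^k$ (and, implicitly, integer $m$, since it works with multi-indices $|\alpha|\leq m$), and your It\^o--Gronwall argument for that case matches the paper's: It\^o's formula for $|D^{\alpha}u^h|^p_{L_p}$ (the paper simply invokes Lemma 5.1 of \cite{K0} rather than a mollification argument), the drift bound from Corollary \ref{proposition-estimate}, the quadratic variation bound, and Lemma \ref{Lemma-Gronwall} with power $q/p$. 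But nothing in your write-up passes from this dyadic/integer case to the general one, and this is where the paper spends the entire second half of its proof.

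The missing idea is an interpolation argument. The paper recasts \eqref{Mar14.0} (for $q>1$, via \eqref{Feb19}) as the statement that the solution operator $(\psi,f,g)\mapsto u^h$ is bounded from $\Psi^m_p\times\mathcal{F}^m_p\times\mathcal{G}^m_p$ to $\mathcal{U}^m_p=L_q(\Omega,L_r([0,T],W^m_p))$, with norm independent of $h$ and $r$. Using the complex interpolation facts (i)--(iii) quoted from \cite{T}, it first interpolates in $p$ between $p_0=2^k$ and $p_1=2^{k+1}$ to reach every $p\geq2$ with integer $m$, then interpolates in $m$ between $\lfloor m\rfloor$ and $\lceil m\rceil$ (with $\theta=\lfloor m\rfloor/\lceil m\rceil$) to reach every real $m\geq1$; this is also precisely why Assumptions \ref{As1}--\ref{As2} are phrased with $\lceil m\rceil$. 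Finally one lets $r\to\infty$ with Fatou's lemma, using the continuity of $u^h$ in $W^m_p$, to recover the supremum in time, and then a localization argument with the stopping times $\tau_n$ together with Lemma 3.2 of \cite{GK2003} extends the estimate from $q>1$ to all $q>0$ (your route gets all $q>0$ directly from Lemma \ref{Lemma-Gronwall}, but only in the dyadic case; after interpolation, which fixes $q>1$, the small-$q$ case must be recovered separately). Without this machinery, or some substitute for it, your argument does not prove the theorem as stated.
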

\begin{proof}
By the preceding argument, we need only prove estimate \eqref{Mar14.0}. 
First let $m$ amd $p$ be as in the conditions of 
Corollary \ref{proposition-estimate}, and fix a $q>1$.  
Let $\alpha$ be a multi-index such that $|\alpha|\leq m$.   
If we apply It\^o's formula to 
$|D^{\alpha}u^h|^p_{L_p}$ by Lemma 5.1 in \cite{K0}, 
one can notice that the term appearing 
in the drift is the left-hand side of \eqref{Mar13.1}, 
with $u^h$ in place of $u$. Using Corollary \ref{proposition-estimate} 
and summing over $|\alpha|\leq m$ we get
$$
d|u_t^h|^p_{W^m_p}\leq N(|u|_{W_p^m}^p
+|f|_{W_p^m}^p+|g|_{W_p^m}^p)\,dt+dm_t^h
$$
with some $N$ depending only on $p, m,$ $d$, $|\Lambda|$, and $K$, 
where
$$
dm_t^h
=(p-1)\int_{\R^d}|\partial_{\alpha}u_t^h|^{p-1}\partial_{\alpha}(\nu_t^r u_t^h+g_t^r)\,dx\,dw^r_t
$$
with $\alpha$ used as a repeated index. It is clear that
$$
d\langle m^h\rangle_t
=(p-1)^2\left(\int_{\R^d}|\delta_{\alpha}u_t^h|^{p-1}
\partial_{\alpha}(\nu_t^r u_t^h+g_t^r)dx\right)^2dt
$$
$$
\leq N((|u_t^h|^{p}_{W^m_p})^2+|g|_{W^m_p}^2|u_t^h|^{2p-2}_{W^m_p})\,dt,
$$
so Lemma \ref{Lemma-Gronwall} 
can be applied to the function $|u_t^h|^{p}_{W^m_p}$ and the power $q/p$, 
which proves \eqref{Mar14.0} for integer $m$, $p=2^k$.  

Note that \eqref{Mar14.0} is equivalent to
$$
[\E\sup_{t\leq T}|u_t^h|_{W^m_p}^q]^{\frac{1}{q}}
\leq N([\E|\psi|_{W^m_p}^q]^{\frac{1}{q}}+
[\E F_{m,p}^{q}]^{\frac{1}{q}}+[\E G_{m,p}^{q}]^{\frac{1}{q}}),
$$
which implies
\begin{equation}                                                                                        \label{Feb19} 
[\E\left(\int_0^T|u_t^h|_{W^m_p}^r\right)^{\frac{q}{r}}]^{\frac{1}{q}}
\leq N([\E|\psi|_{W^m_p}^q]^{\frac{1}{q}}
+
[\E F_{m,p}^{q}]^{\frac{1}{q}}+[\E G_{m,p}^{q}]^{\frac{1}{q}}),
\end{equation}
for any $r>1$, with another constant $N$, independent from $r$.
In other words, this means that for the special 
case of $m$ and $p$ considered so far the solution operator
$$
(\psi,f,g)\rightarrow u^h
$$
continuously maps 
$\Psi^m_p\times \mathcal{F}^m_p\times \mathcal{G}^m_p$ 
to $\mathcal{U}^m_p$, where
$$
\Psi^m_p=L_{q}(\Omega,W_p^m),$$
$$\mathcal{F}^m_p=L_{q}(\Omega,L_p([0,T],W_p^m)),$$
$$\mathcal{G}^m_p=L_{q}(\Omega,L_p([0,T],W_p^m(l_2))),$$
$$\mathcal{U}^m_p=L_q(\Omega,L_{r}([0,T],W^m_p)).$$
Let us denote the complex interpolation space 
between any Banach spaces $A_0$ and $A_1$ 
with parameter $\theta$ by $[A_0,A_1]_{\theta}$. 
Recall the following interpolation properties 
(see 1.9.3, 1.18.4, and 2.4.2 from \cite{T})
\begin{enumerate}[(i)]
\item 
If a linear operator $T$ is continuous from $A_0$ to $B_0$ and from $A_1$ to $B_1$, then it is also continuous from $[A_0,A_1]_{\theta}$ to $[B_0,B_1]_{\theta}$, moreover, its norm between the interpolated spaces depends only on $\theta$ and its norm between the original spaces.
\item 
For a measure space $M$ and $1< p_0,p_1<\infty$, 
$$
[L_{p_0}(M,A_0),L_{p_1}(M,A_1)]_{\theta}=L_{p_{\theta}}(M,[A_0,A_1]_{\theta}),
$$ 
where $1/p_{\theta}=(1-\theta)/p_0+\theta/p_1$.
\item 
For $m_0,m_1\in\bR$, $1<p_0,p_1<\infty$,
$$
[W^{m_0}_{p_0},W^{m_1}_{p_1}]_{\theta}=W^{m_{\theta}}_{p_{\theta}},
$$
where $m_{\theta}=(1-\theta)m_0+\theta m_1$, 
and $1/p_{\theta}=(1-\theta)/p_0+\theta/p_1$.
\end{enumerate}
Now take any $p\geq2$, and take $p_0\leq p\leq p_1$ 
such that $p_0=2^k$ and $p=2^{k+1}$ for $k\in\mathbb{N}$, 
and set $\theta\in[0,1]$ such that 
$1/p=(1-\theta)/p_0+\theta/p_1.$ 
By property (ii) we have
$$
\Psi^m_p=[\Psi^m_{p_0},\Psi^m_{p_1}]_{\theta},\;\mathcal{F}^m_p
=[\mathcal{F}^m_{p_0},\mathcal{F}^m_{p_1}]_{\theta},
$$
$$\mathcal{G}^m_p
=[\mathcal{G}^m_{p_0},\mathcal{G}^m_{p_1}]_{\theta},\;\mathcal{U}^m_p
=[\mathcal{U}^m_{p_0},\mathcal{U}^m_{p_1}]_{\theta},
$$
and therefore by (i) the solution operator is continuous for any $p\geq 2$  
and integer $m\geq1$.

For arbitrary $m\geq1$, $p\geq 2$, set $\theta=\lfloor m\rfloor/\lceil m\rceil$.   
Then properties (ii) and (iii) imply that
$$
\Psi^m_p=[\Psi^{\lfloor m\rfloor}_{p},\Psi^{\lceil m\rceil}_{p}]_{\theta},\;\mathcal{F}^m_p=[\mathcal{F}^{\lfloor m\rfloor}_{p},\mathcal{F}^{\lceil m\rceil}_{p}]_{\theta},
$$
$$
\mathcal{G}^m_p
=[\mathcal{G}^{\lfloor m\rfloor}_{p},\mathcal{G}^{\lceil m\rceil}_{p}]_{\theta},\;\mathcal{U}^m_p
=[\mathcal{U}^{\lfloor m\rfloor}_{},\mathcal{U}^{\lceil m\rceil}_{p}]_{\theta}.
$$
If Assumptions \ref{As1} and \ref{As2} hold, 
then the solution operator is continuous from 
$\Psi^{\lceil m\rceil}_{p}
\times 
\mathcal{F}^{\lceil m\rceil}_{p}
\times 
\mathcal{G}^{\lceil m\rceil}_p$ 
to $\mathcal{U}^{\lceil m\rceil}_{p}$, 
and from 
$\Psi^{\lfloor m\rfloor}_{p}
\times \mathcal{F}^{\lfloor m\rfloor}_{p}
\times \mathcal{G}^{\lfloor m\rfloor}_{p}$ 
to $\mathcal{U}^{\lfloor m\rfloor}_{p}$. 
Applying property (i) again therefore yields 
\eqref{Feb19} for $m$, $p$. 
Letting $r\rightarrow\infty$ and keeping in mind that 
$u^h$ 
is a continuous in $W^m_p$-valued process, 
using Fatou's lemma we get \eqref{Mar14.0} when $q>1$. 
 Hence for $q>1$ we obtain 
$$
E({\bf1}_A\sup_{t\leq \tau\wedge\tau_n}
|u_t^h|_l^q)
\leq NE|{\bf1}_A{\bf1}_{\tau_n>0}\psi|_{W^m_p}^q
+
N\E({\bf1}_AF_{m,p}^q(\tau\wedge\tau_n))
$$
$$
+N\E ({\bf1}_AG_{m,p}^q(\tau\wedge\tau_n))
$$
for every stopping time $\tau\leq T$,  integer $n\geq1$, and $A\in\cF_0$, where 
$$
\tau_n:=\inf\{t\in[0,T]: \cR_{m,p}(t)\geq n\}, 
$$
$$
\cR_{m,p}^p(t):=|\psi|^p_{W^m_p}+\int_0^t|f_s|^p_{W^m_p}+|g_s|^p_{W^m_p}\,ds, 
$$
and $N$ is a constant depending only on $K$, $T$, $m$, $q$, $d$ and $|\Lambda|$. 
By virtue of Lemma 3.2 from \cite{GK2003} this implies 
$$
\E(\sup_{t\leq \tau\wedge\tau_n}
|u_t^h|_{W^m_p}^q)
\leq N(\E|\psi|_{W^m_p}^q
+
\E F_{m,p}^q(\tau\wedge\tau_n)
+
\E G_{m,p}^q(\tau\wedge\tau_n))
$$
for any $q>0$ 
with a constant $N=N(K,T,p,d,m,|\Lambda|)$. We finish the proof 
by letting here $n\to\infty$.  

\end{proof}

\emph{Proof of Theorem \ref{theorem 1}.} 
  By rewriting the equation in the non-divergence form, 
  the theorem for integer $m$ follows from Theorem 2.1 in 
  \cite{GGK}. 
  Thus we need only prove it when $m$ is non integer. From \cite{GGK}, 
  we know that under the conditions of the theorem,
  \eqref{SPDE}-\eqref{SPDE-initial} admits a unique 
 solution $u$. 
 Moreover, it is $W^{\lfloor m\rfloor}_p$-valued, and
\begin{equation}                                                                               \label{june4}
\E\sup_{t\in[0,T]}|u|_{W^l_p}^q
\leq N(\E|\psi|_{W^{l}_p}^q
+\E F_{l,p}^q(T)+\E G_{l+\kappa},p^q(T))
\end{equation}
holds for $l=0,1,...,\lceil m\rceil$ and $q>0$, where $\kappa=0$ when 
$(\mu^i)=0$ and $\kappa=1$ when $(\mu^i)$ is not identically zero. 
Assume first that $q>1$. 
Then following the same interpolation arguments as above, 
we find that \eqref{june4} holds for all $l\in[0,\lceil m\rceil]$,
with $\esssup$ in place of $\sup$ on the left-hand side. 
Then by substituting $(1-\Delta)^{(m-1)/2}\phi$ 
in place of $\phi$ in \eqref{solution}, 
we obtain that for any $\phi\in C_0^{\infty}$, 
almost surely for all $t\in[0,T]$, 
$$
((1-\Delta)^{(m-1)/2}u_t,\phi)=((1-\Delta)^{(m-1)/2}\psi,\phi)
$$
$$+\int_0^t\sum_{i=1}^d(\mathbf{F}^i_s,D_i\phi)
+(\mathbf{F}^0_s,\phi)\,ds+\int_0^t(\mathbf{G}^k_s,\phi)\,dw^k_s,
$$
where, due to estimates such as
$$
|(1-\Delta)^{(m-1)/2}(a^{ij}D_jv)|_{L_p}\leq K|v|_{W^m_p},
$$
 $\mathbf{F}^i$ and $\mathbf{G}=(\mathbf{G}^k)_{k=1}^{\infty}$ 
are predictable processes with values in $L_p$, such that
$$
\int_0^T(\sum_{i=0}^d|\mathbf{F}^i_t|_{L_p}
+|\mathbf{G}_t|_{L_p(l_2)})dt<\infty.
$$
Using It\^o's formula for the $L_p$ norm from \cite{K0}, 
we find that $(1-\Delta)^{(m-1)/2}u$ is a strongly $L_p$-valued process, 
and thus $u$ is a strongly continuous $W^{m-1}_p$-valued process. 
Hence almost surely 
\begin{equation}                                                                         \label{1.6.7.14}
((1-\Delta)^{m/2}u_t,\varphi)=((1-\Delta)^{(m-1)/2}u_t,(1-\Delta)^{1/2}\varphi)
\end{equation}
is continuous in $t\in[0,T]$ for all $\varphi\in C_0^{\infty}$. 
Let $\Phi$ denote the set of those $C^{\infty}_0$ functions which belong 
to the unit ball of $L_{p^{\ast}}$, where $p^{\ast}=p/(p-1)$. Then 
$$
\sup_{t\in[0,T]}|u_t|_{W^m_p}=\sup_{t\in[0,T]}\sup_{\varphi\in\Phi}
((1-\Delta)^{m/2}u_t,\varphi)=\sup_{\varphi\in\Phi}
\sup_{t\in[0,T]}((1-\Delta)^{m/2}u_t,\varphi)
$$
$$
=\sup_{\varphi\in\Phi}\esssup_{t\in[0,T]}((1-\Delta)^{m/2}u_{t},\varphi_j)
\leq 
\esssup_{t\in[0,T]}|u_{t}|_{W^m_p}<\infty\quad (a.s.). 
$$
This, 
the continuity in $t\in[0,T]$ of the expression in 
\eqref{1.6.7.14} and the denseness of $C_0^{\infty}$ in 
$W^{-m}_{p^{\ast}}$ imply
 that almost surely $u$ is a $W^m_p$-valued  
weakly continuous process.  Consequently, \eqref{june4} holds 
for all $l\in[0,m]$ and $q>1$. 
Hence using Lemma 3.2 from \cite{GK2003} in the same way 
as at the end of the proof 
of Theorem \ref{theorem-Apr9}, 
we obtain \eqref{june4} for all $l\in[0,m]$ and $q>0$.

\section{Proof of the main results}\label{section-proof}

\emph{Proof of Theorems \ref{Thm1}-\ref{Thm3}.} 
To prove Theorem \ref{Thm1}, first consider the functions
$$
F(h)=\delta_{h,\lambda}\phi(x)=\int_0^1\partial_{\lambda}\phi(x+h\theta\lambda)
\,d\theta,
$$
$$
G(h)=\delta_{h,\lambda}\delta_{h,\lambda}\psi(x)=\int_0^1\int_0^1\partial_{\lambda}\partial_{\lambda}\psi(x+h\lambda(\theta_1+\theta_2))d\theta_1d\theta_2
$$
for fixed $\phi\in W^{n+l+2}_p,\psi\in W^{n+l+3}_p$, $n,l\geq0$. Applying Taylor's formula at $h=0$ up to $n+1$ terms we get that
$$
|\delta_{h,\lambda}\phi-\sum_{i=0}^nh^iA_i\partial_{\lambda}^{i+1}\phi|_{W^l_p}\leq N|h|^{n+1}|\phi|_{W^{n+l+2}_p},
$$
$$
|\delta_{-h,\lambda}\delta_{h,\lambda}\psi
-\sum_{i=0}^nh^i
B_{i}\partial_{\lambda}^{i+2}\psi|_{W^l_p}
\leq N|h|^{n+1}|\psi|_{W^{l+n+3}_p}
$$
with constants $A_i=1/(i+1)!$ and 

\begin{equation*}                                                                      \label{1.8.7.14}
B_i= \left\{ \begin{array}{ll}
0 & \mbox{if $i$ is odd}\\
\frac{2}{(i+2)!} & \mbox{if $i$ is even}
\end{array} \right. , 
\end{equation*}
where $N=N(|\Lambda|,d,l,n)$ is a constant.
 Similarly, or in fact  equivalently to the first inequality, we have 
$$
|T_{h,\lambda}\varphi-\sum_{i=0}^n\frac{h^i}{i!}\partial_{\lambda}^i\varphi|_{W^l_p}
\leq N|h|^{n+1}|\varphi|_{W^{n+l+1}_p}
$$
for $\varphi\in W^{n+l+1}_p$, where $\partial^0_{\lambda}$ denotes the identity operator. Without going into details, 
it is clear that, due to Assumption \ref{As1},
from these expansions one can obtain operators $\mathfrak{L}_t^{(i)}$ 
for integers $i\in[0,\lceil m\rceil]$
such that
$\mathfrak{L}^0_t\phi
=\partial_i a^{ij}\partial_j\phi+b^i\partial_i\phi+c\phi$,
 
\begin{align}                                                                              
|\mathfrak{L}_t^{(i)}\phi|_{W^l_p}&\leq N|\phi|_{W^{l+i+1}_p} 
\quad\text{for $i$ odd, $i+l\leq \lceil m\rceil$},                                  \label{odd}          \\
|\mathfrak{L}_t^{(i)}\phi|_{W^l_p}&\leq N|\phi|_{W^{l+i+2}_p}           \label{even}
\quad\text{for $i$ even, $i+l\leq \lceil m\rceil$}, 
\end{align}

 and
\begin{equation}                                                   \label{extrapolation-estimate}
|(L^h_t-\sum_{i=0}^n\frac{h^i}{i!}\mathfrak{L}_t^{(i)})\phi|_{W^l_p}
\leq N|h|^{n+1}|\phi|_{W^{n+l+3}_p}
\quad\text{for $n+l<\lceil m\rceil$}
\end{equation}
with $N=N(|\Lambda|,K,d,p,m)$. 
The random fields $u^{(j)}$  in expansion \eqref{expansion}
can then be obtained from the system of SPDEs 
\begin{align}                                                                       
du^{(j)}_t&=(\mathfrak{L}^{(0)}_tu_t^{(j)}           
+\sum_{l=1}^j\tbinom{j}{l}\mathfrak{L}_t^{(l)}u_t^{(j-l)})\,dt
+\nu^r_t(x)u^{(j)}_t(x)\,dw^r_t                                              \label{May13.0}\\
u^{(j)}_0&=0, \quad j=1,...,k ,                                                \label{systemini}
\end{align}
where 
$v^{(0)}=u$, the solution of 
\eqref{SPDE}-\eqref{SPDE-initial} when $\mu=0$.
The following theorem holds, being the exact analogue 
of Theorem 5.1 from \cite{Gy0}. 
It can be proven inductively on $j$, 
by a straightforward application of 
Theorem \ref{theorem 1} and \eqref{odd}-\eqref{even}.

\begin{theorem}                                                                  \label{theorem system}
Let $k\geq1$ be an integer, and let  
Assumptions \ref{As0}, \ref{As1} and \ref{As2} hold with $m\geq2k+1$.
Then there is a unique solution $u^{(1)},\ldots, u^{(k)}$ 
of \eqref{May13.0}-\eqref{systemini}.  
Moreover, $u^{(j)}$ is a $W^{m-2j}_p$-valued weakly continuous process, 
it is
strongly continuous as a $W^{m-2j-1}_p$-valued process, and
$$
\E\sup_{t\in[0,T]}|u^{(j)}_t|^q_{W^{m-2j}_p}
\leq  N(\E|\psi|_{W^{m}_p}^q
+\E F_{m,p}^q(T)+\E G_{m,p}^q(T))
$$
for $j=1,\ldots,k$, for any $q>0$, with a constant $N=N(K,m,p,q,T,|\Lambda|)$.
\end{theorem}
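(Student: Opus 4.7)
The plan is to prove Theorem \ref{theorem system} by induction on $j$, treating each equation in the system \eqref{May13.0}--\eqref{systemini} as a linear SPDE of the form \eqref{SPDE}--\eqref{SPDE-initial} (with $\mu=0$) in which the inhomogeneity
$$
F_t^{(j)}:=\sum_{l=1}^{j}\tbinom{j}{l}\mathfrak{L}_t^{(l)}u_t^{(j-l)}
$$
plays the role of the free term $f$, with $g\equiv 0$ and zero initial condition. The base case $j=0$ is simply $u^{(0)}=u$, for which Theorem \ref{theorem 1} with $l=m$ and the fact that $\mathfrak{L}^{(0)}$ coincides with the original (divergence-form) differential operator give the stated regularity and the estimate.

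For the inductive step, assume that $u^{(0)},\ldots,u^{(j-1)}$ have been constructed, with $u^{(i)}$ a weakly continuous $W^{m-2i}_p$-valued process satisfying the analogous estimate. Using \eqref{odd} for odd $l$ and \eqref{even} for even $l$, I would check
$$
|\mathfrak{L}_t^{(l)}u_t^{(j-l)}|_{W^{m-2j}_p}\leq N|u_t^{(j-l)}|_{W^{m-2j+l+\varepsilon_l}_p},
$$
with $\varepsilon_l=1$ or $2$ according to the parity of $l$, and note that $m-2j+l+\varepsilon_l\leq m-2(j-l)$ for all $1\leq l\leq j$ (with equality at $l=1$ and $l=2$). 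Combined with $l+(m-2j)\leq\lceil m\rceil$ for $l\leq j\leq k$ (valid since $m\geq 2k+1$), this places $F^{(j)}_t$ in $W^{m-2j}_p$ with
$$
|F^{(j)}_t|_{W^{m-2j}_p}\leq N\sum_{l=1}^{j}|u_t^{(j-l)}|_{W^{m-2(j-l)}_p},
$$
and in particular $F^{(j)}\in L_p([0,T],W^{m-2j}_p)$ almost surely, with the correct moment bound coming from the inductive hypothesis.

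Now the $j$-th equation, rewritten as
$$
du^{(j)}_t=\bigl\{D_i(a_t^{ij}D_ju^{(j)}_t)+b_t^iD_iu^{(j)}_t+c_tu^{(j)}_t+F_t^{(j)}\bigr\}\,dt+\nu^r_tu^{(j)}_t\,dw^r_t,\quad u^{(j)}_0=0,
$$
fits the framework of Theorem \ref{theorem 1} under Assumption \ref{assumption 1}(ii), Assumption \ref{assumption 2}(ii) (at order $m-2j$), and Assumption \ref{assumption 3}, all of which follow from Assumptions \ref{As0}, \ref{As1} via the compatibility relations \eqref{may29}--\eqref{nonnegativity}. Applying Theorem \ref{theorem 1} with $l=m-2j$ (for existence, uniqueness, and the weak/strong continuity statements) and then with the same exponent for the $L_q$ bound yields
$$
\E\sup_{t\in[0,T]}|u^{(j)}_t|^q_{W^{m-2j}_p}\leq N\,\E\Bigl(\int_0^T|F^{(j)}_s|_{W^{m-2j}_p}^p\,ds\Bigr)^{q/p},
$$
and substituting the previous display together with the inductive bounds on $|u^{(j-l)}|_{W^{m-2(j-l)}_p}$ closes the induction and expresses everything in terms of $\E|\psi|^q_{W^m_p}+\E F_{m,p}^q(T)+\E G_{m,p}^q(T)$.

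The only genuinely delicate point is the index bookkeeping: one must verify that the loss of two derivatives per induction step is exactly what the operator estimates \eqref{odd} and \eqref{even} allow, and that the constraint $i+l\leq\lceil m\rceil$ in those estimates is respected throughout, which is where the hypothesis $m\geq 2k+1$ is used sharply. Once this accounting is confirmed, the rest is a routine application of Theorem \ref{theorem 1} at each level, and no new analytic input beyond what is already in Section \ref{section-estimate} is needed.
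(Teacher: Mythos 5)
Your proposal is correct and follows essentially the same route as the paper, which proves Theorem \ref{theorem system} exactly by induction on $j$, applying Theorem \ref{theorem 1} (with $\mu=0$, zero initial data, free term $F^{(j)}$, $g\equiv0$) together with the operator estimates \eqref{odd}--\eqref{even}; your verification that $\varepsilon_l\leq l$ (so that $\mathfrak{L}^{(l)}_t u^{(j-l)}_t$ lands in $W^{m-2j}_p$ under the inductive hypothesis) is the key bookkeeping. One tiny correction: the constraint $l+(m-2j)\leq\lceil m\rceil$ holds automatically since $l\leq j$, and the hypothesis $m\geq 2k+1$ is rather needed to keep the Sobolev orders $m-2j\geq m-2k\geq 1$ so that Theorem \ref{theorem 1} remains applicable at every level.
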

Set
$$
\overline{r}^h_t(x)=u^h_t(x)
-\sum_{j=0}^k\frac{h^j}{j!}u_t^{(j)}(x), 
$$
for $t\in[0,T]$ and $x\in\bR^d$, where 
$u^h$ is the solution of \eqref{differenceeq2}-\eqref{differenceeq2-initial}. 
Then it is not difficult to verify that $\overline{r}^h$ is the solution, 
in the sense of Definition \ref{definition FDE}, of the finite difference equation 
\begin{equation*}    
\overline{r}^h_t(x)=(L^h_t\overline{r}^h_t(x)
+F^h_t(x))\,dt
+\nu^r_t(x)\overline{r}^h_t(x)\,dw^r_t, 
\quad t\in(0,T],\,x\in\bR^d
\end{equation*}  
with initial condition                                                    
$\overline{r}^h_0(x)=0$ for $x\in\bR^d$,                               
where
$$
F^h_t
=\sum_{j=0}^k\frac{h^j}{j!}
\left(L^h_t-\sum_{i=0}^{k-j}\frac{h^i}{i!}\mathfrak{L}_t^{(i)}\right)u^{(j)}_t.
$$
Hence by applying Theorem \ref{theorem-Apr9} we get
$$
\E\sup_{t\in[0,T]}|\overline{r}_t^h|_{W_p^{m-2k-3}}^q
\leq N\E\left(\int_0^t|F_t|_{W^{m-2k-3}_p}^pdt\right)^{q/p}.
$$
Now using $m-2k-3>d/p$, for the left-hand side we can write
$$
\E\sup_{t\in[0,T]}
\sup_{x\in {\bG_h}}
|\overline{r}_t^h(x)|^q+\E\sup_{t\in[0,T]}
|\overline{r}_t^h|^q_{l_{p,h}}\leq N\E\sup_{t\in[0,T]}
|\overline{r}_t^h|_{W_p^{m-2k-3}}^q,
$$
while \eqref{extrapolation-estimate} and the theorem above yield
$$
\E\sup_{t\in[0,T]} |F_t^h|_{W^{m-2k-3}_p}^q
\leq Nh^{q(k+1)}\sum_{j=0}^k\E
\sup_{t\in[0,T]}|u^{(j)}_t|^q_{W_p^{m-2j}}
$$
$$
\leq Nh^{q(k+1)}(\E|\psi|_{W^{m}_p}^q
+
\E F_{m,p}^q(T)+\E G_{m,p}^q(T)), 
$$
where $N$ denotes some contstants which  
depend only on $K$, $m$, $d$, $q$, $p$, $T$  
and $|\Lambda|$. 
Putting these inequalities together we obtain the estimate
$$
\E\sup_{t\in[0,T]}
\sup_{x\in {\bG_h}}
|\overline{r}_t^h(x)|^q+\E\sup_{t\in[0,T]}
|\overline{r}_t^h|^q_{l_{p,h}}
$$
\begin{equation}                                                                 \label{estimate r0}
\leq Nh^{q(k+1)}(\E|\psi|_{W^{m}_p}^q
+
\E F_{m,p}^q(T)+\E G_{m,p}^q(T)), 
\end{equation}
for all $h>0$ with a constant $N=N(K, m, d, q, p, T,|\Lambda|)$. 
Thus we have the following theorem.

\begin{theorem}                                                                   \label{theorem main0}
Let $k\geq0$ be an integer and let Assumptions \ref{As0},  
\ref{As1} and \ref{As2} 
hold with $m>2k+3+d/p$. 
Then there are continuous random fields $u^{(1)},\ldots u^{(k)}$ 
on $[0,T]\times \R^d$, 
independent of $h$, such that almost surely
\begin{equation}                                                                       \label{expansion0}
u^h_t(x)=\sum_{j=0}^k\frac{h^j}{j!}u^{(j)}_t(x)+{\overline r}_t^h(x)
\end{equation}
for all $t\in[0,T]$ and $x\in \bR^d$, where $u^{(0)}=u$, 
$u^h$ is the solution of \eqref{differenceeq2}-\eqref{differenceeq2-initial}, 
and ${\overline r}^h$ 
is a continuous random field on $[0,T]\times \R^d$, which for any $q>0$ 
satisfies the estimate  \eqref{estimate r0}. 
\end{theorem}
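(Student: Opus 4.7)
The plan is to assemble the pieces already prepared in the preceding discussion. First, I would invoke Theorem \ref{theorem system} to obtain the processes $u^{(1)},\ldots,u^{(k)}$ as the unique solution of the system \eqref{May13.0}-\eqref{systemini}, taking $u^{(0)} = u$ from Theorem \ref{theorem 1}. Since $m > 2k + 3 + d/p$, for every $j\in\{0,1,\ldots,k\}$ we have $m - 2j - 1 > d/p$, so the strong continuity of $u^{(j)}$ as a $W^{m-2j-1}_p$-valued process together with the Sobolev embedding $W^{m-2j-1}_p\hookrightarrow C_b(\bR^d)$ yields that $u^{(j)}$ admits a modification which is a continuous random field on $[0,T]\times\bR^d$, as required for the statement.

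Next, I would verify that $\overline{r}^h := u^h - \sum_{j=0}^k (h^j/j!)u^{(j)}$ solves, in the sense of Definition \ref{definition FDE}, the finite difference equation
$$
d\overline{r}^h_t = (L^h_t\overline{r}^h_t + F^h_t)\,dt + \nu^r_t\overline{r}^h_t\,dw^r_t, \qquad \overline{r}^h_0 = 0,
$$
with $F^h_t = \sum_{j=0}^k (h^j/j!)\bigl(L^h_t - \sum_{i=0}^{k-j}(h^i/i!)\mathfrak{L}^{(i)}_t\bigr)u^{(j)}_t$. This is a direct computation: applying $L^h_t$ spatially and the multiplication by $\nu^r_t$ to $\sum_j (h^j/j!) u^{(j)}_t$, substituting the SPDEs satisfied by each $u^{(j)}$, and collecting the cross terms indexed by $i+j$ gives, after cancellation of the matching contributions $\mathfrak{L}^{(i)}_tu^{(j)}_t$ with $i+j\leq k$, exactly the equation above; the free terms $f_t$ and $g^r_t$ cancel because they appear only in the equation for $u^{(0)}=u$.

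With this equation in hand, I would apply Theorem \ref{theorem-Apr9} to $\overline{r}^h$ with the smoothness index $m-2k-3$ in place of $m$, which is positive and exceeds $d/p$ by assumption, obtaining
$$
\E\sup_{t\in[0,T]}|\overline{r}^h_t|^q_{W^{m-2k-3}_p}\leq N\,\E\Bigl(\int_0^T|F^h_t|^p_{W^{m-2k-3}_p}\,dt\Bigr)^{q/p}.
$$
The free term is then bounded by combining \eqref{extrapolation-estimate} with $n=k-j$ and $l=m-2k-3$ (so that $n+l<\lceil m\rceil$) with the $W^{m-2j}_p$-estimates for $u^{(j)}$ furnished by Theorem \ref{theorem system}, producing the desired factor $h^{q(k+1)}$ multiplied by the data norms. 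Finally, since $m-2k-3>d/p$, the embedding $W^{m-2k-3}_p\hookrightarrow C_b(\bR^d)$ together with \eqref{embedding} upgrades the $W^{m-2k-3}_p$-control into both the pointwise supremum bound over $\bG_h$ and the $l_{p,h}$-norm bound uniformly in $h$, yielding \eqref{estimate r0}.

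The main obstacle is not any single deep step but the bookkeeping in the second paragraph: one has to expand $dL^h_t\bigl(\sum_j (h^j/j!)u^{(j)}_t\bigr)$, match the powers of $h$ against the terms $\binom{j}{l}\mathfrak{L}^{(l)}_tu^{(j-l)}_t$ present in \eqref{May13.0}, and confirm that the residue is precisely $F^h_t$ as written. Once this algebraic identification is secured, the remainder of the argument is a transparent chain of Theorem \ref{theorem-Apr9}, the consistency estimate \eqref{extrapolation-estimate}, Theorem \ref{theorem system}, and the embedding \eqref{embedding}.
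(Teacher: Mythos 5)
Your proposal is correct and follows essentially the same route as the paper: define $\overline{r}^h=u^h-\sum_{j=0}^k(h^j/j!)u^{(j)}$, verify via the system \eqref{May13.0}--\eqref{systemini} that it solves the finite difference equation with free term $F^h_t$, estimate it through Theorem \ref{theorem-Apr9}, bound $F^h$ by \eqref{extrapolation-estimate} and Theorem \ref{theorem system}, and conclude continuity and the grid/sup bounds by Sobolev embedding and \eqref{embedding}. The only (harmless) difference is that you track the indices $m-2j-1>d/p$ for each $u^{(j)}$ slightly more carefully than the paper's blanket statement that all fields are continuous $W^{m-1}_p$-valued processes.
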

\begin{proof}
By Theorems \ref{theorem 1}, \ref{theorem-Apr9} and \ref{theorem system}
$u^h$, $u^{(0)}$, $u^{(1)}$,...,$u^{(k)}$ are $W^{m-1}_p$-valued 
continuous processes. Since due to our assumption $m-1>d/p$, by 
Sobolev's theorem on embedding $W^m_p(\bR^d)$ into $C_b(\bR^d)$ 
we get that these random fields are continuous in $(t,x)\in [0,T]\times\bR^d$. 
(Remember that we always identify the functions with their 
continuous version when they have one.) Hence 
\eqref{expansion0} holds by the definition of $\overline{r}^h$, and 
estimate \eqref{estimate r0} is proved above. 
\end{proof} 

To finish the proof of Theorem \ref{Thm1} we need only show 
that if Assumption \ref{AsLambda} holds then 
under the conditions of Theorem \ref{theorem main0} 
the restriction of the solution $u^h$ 
of \eqref{differenceeq2}-\eqref{differenceeq2-initial} onto 
$[0,T]\times\bG_h$ is a continuous $l_p$-valued process which 
solves \eqref{differenceeq}-\eqref{differenceeq-initial}. 
To this end note that under the conditions of Theorem \ref{theorem main0} 
$u^h$ is a continuous $W^{m-1}_p$ valued process, 
and if Assumption \ref{AsLambda} also holds then by 
\eqref{embedding} its restriction to $[0,T]\times\bG_h$ is a continuous 
$l_p$-valued process. To see that this process satisfies 
\eqref{differenceeq}-\eqref{differenceeq-initial} we 
fix a point $x\in\bG_h$ 
and take a nonnegative smooth function $\varphi$ with 
compact support in $\bR^d$ such that its integral over $\bR^d$ 
is one. Define for each integer $n\geq1$ the function 
$\varphi^{(n)}(z)=n^{d}\varphi(n(z-x))$,  
$z\in\bR^d$. 
Then by Definition \ref{definition FDE} 
we have for $u^h$, the solution of 
\eqref{differenceeq}-\eqref{differenceeq-initial}, 
that almost surely 
$$
(u_t^h,\varphi^{(n)})=(\psi,\varphi^{(n)})
+\int_0^t(L^h_tu_s^h+f_s,\varphi^{(n)})\,ds
+\int_0^t(\nu^{r}_su_s^h+g^r_s,\varphi^{(n)})\,dw^r_s
$$
for all $t\in[0,T]$ and for all $n\geq1$. 
Letting here $n\to\infty$, for each $t\in[0,T]$ we get  
\begin{equation}                                                                   \label{pointwise}
u_t^h(x)=\psi(x)
+\int_0^t(L^h_su_s^h(x)+f_s(x))\,ds
+\int_0^t(\nu^{r}_tu_s^h(x)+g^r_s(x))\,dw^r_s
\end{equation}
almost surely, since $u^h$, $\psi$, $f$, $\nu$, $g$  and 
the coefficients of $L^h$ are continuous in $x$, due 
to Sobolev's theorem on embedding 
$W^m_p(\bR^d)$ into $C_b(\bR^d)$ in the case  $m>d/p$. 
Note that both $u^h_t(x)$ and the random field on 
the the right-hand side of equation \eqref{estimate r0} 
are continuous in $t\in[0,T]$.  
Therefore we have this 
equality almost surely for all $t\in[0,T]$ 
and $x\in\bG_h$. 
The proof of Theorem \ref{Thm1} is complete.

\medskip
The extrapolation result, Theorem \ref{Thm2}, follows from 
Theorem \ref{Thm1} by standard calculations, 
and hence Theorem \ref{Thm3} on  the rate of
almost sure convergence follows 
by a standard application of the Borel-Cantelli Lemma.

\qed

\emph{Proof of Theorem \ref{Thm4}.} 
Let 
$\rho(x)=\rho_{\overline{s}}(\epsilon x)
=1/(1+|\epsilon x|^2)^{\overline{s}/2}$, 
where $\epsilon>0$ is to be determined later and choose $p$ large enough so that 
$1>d/p$ - and therefore $m>2k+3+d/p$ -, and Assumption \ref{As2} holds 
for  $\psi\rho$, $f\rho$ and $g\rho$ in place of 
$\psi$, $f$ and $g$, respectively. 
After some calculations one gets that $u$ is the solution of 
\eqref{SPDE-poly}-\eqref{SPDE-poly-initial} if and only if 
$u\rho$ is the solution of the equation
$$
dv_t(x)
=(D_i\hat{a}_t^{ij}(x) D_{j}v_t(x)+\hat{b}_t^{i}(x)D_{i}v_t(x)+\hat{c}_t(x)v_t(x)+f_t\rho(x))\,dt
$$
\begin{equation}                                                                       \label{May3}
+(\nu^r_t(x)v_t(x)+g^r_t\rho(x))\,dw^r_t
\end{equation}
for $(t,x)\in[0,T]\times\R^d$, with the initial condition 
\begin{equation}\label{May3-initial}
v_0(x)=\psi\rho(x),
\end{equation} 
for $x\in\R^d$, where the coefficients are given by
$$\hat{a}^{ij}=a^{ij},$$
$$\hat{b}^i=b^i_t-2\sum_{j=1}^da^{ij}\frac{D_j\rho}{\rho} \text{  for  }i\neq0,$$
$$\hat{c}=c-\sum_{i,j=1}^da^{ij}\frac{D_iD_j\rho}{\rho}-\sum_{i,j=1}^dD_ia^{ij}\frac{D_j\rho}{\rho}-\sum_{i=1}^d\hat{b}^i\frac{D_i\rho}{\rho}.$$
Due to our choice of $\rho$, these coefficients still satisfy the conditions of Theorem 2.1 from \cite{GGK}. Applying this theorem, we obtain a $W^m_p$-valued uniqe solution $v$. Using Sobolev embedding, we get that $v/\rho$ - which is a solution of \eqref{SPDE-poly} - is a $P^{m-1}_p$-valued process. 

One can similarly transform the finite difference equations, using \eqref{Leibniz}-\eqref{Apr 18}. It turns out that $u^h$ is a solution 
of \eqref{differenceeq}-\eqref{differenceeq-initial} if and only
 if $u^h\rho$ is a solution of the equation
\begin{equation}                                                                                        \label{May3.0}
v^h_t(x)=\{\hat{L}_t^h(x)v_t^h(x)+f_t\rho(x))\,dt
+(\nu^r_t(x)v_t^h(x)+g^r_t\rho(x))\,dw_t^r
\end{equation}
for $(t,x)\in[0,T]\times \bG_h$ with initial condition 
\begin{equation}                                                                                \label{May3.0-initial}
v_0^h(x)=\psi\rho(x),
\end{equation}
for $x\in \bG_h$, where 
$$
\hat{L}^h_t\varphi
=\sum_{\lambda\in\Lambda_0}
\delta_{-h,\lambda}(
\hat{\mathfrak{a}}_h^{\lambda}\delta_{h,\lambda}\varphi)
+\sum_{\gamma\in\Lambda_1}\hat{\mathfrak{p}}_h^{\gamma}\delta_{h,\gamma}
\varphi
+\sum_{\gamma\in\Lambda_1}\hat{\mathfrak{c}}_h^{\gamma}T_{h,\gamma}
\varphi,
$$
with 
$$
\hat{\mathfrak{a}}^{\lambda}_h=\mathfrak{a}^{\lambda}_h,
$$
$$
\hat{\mathfrak{p}}^{\gamma}_h
=\mathfrak{p}^{\gamma}_h+\frac{(T_{h,-\lambda}\mathfrak{a}^{\lambda})\delta_{h,-\lambda}\rho-(T_{h,\lambda}\mathfrak{a}^{-\lambda})\delta_{h,\lambda}\rho}{\rho},
$$

$$
\hat{\mathfrak{c}}^{\lambda}_h
=\mathfrak{c}^{\lambda}_h\frac{\rho}{T_{h,\lambda}\rho}-\frac{(\delta_{h,-\lambda}\mathfrak{a}^{\lambda})\delta_{h,-\lambda}\rho-\mathfrak{a}^{\lambda}\delta_{h,-\lambda}\delta_{h,\lambda}\rho+\hat{\mathfrak{p}}^{\lambda}\delta_{h,\lambda}\rho}
{T_{h,\lambda}\rho},
$$
where $\mathfrak{a}^{\lambda}$ is understood to be 0 when not defined. 

As it was mentioned earlier, the restriction to 
$\bG_h$ of the continuous modifications of 
$\psi\rho,f\rho,g\rho$ are in $l_{p,h}$, $l_{p,h}$-valued, and 
$l_{p,h}(l_2)$-valued processes, respectively. 
The coefficients above are bounded, so as we have already seen, 
there exists a unique $l_{p,h}$-valued solution $v^h$, in particular, it is bounded. 
Therefore $v^h/\rho$ is a solution of \eqref{differenceeq} and has polynomial growth.

By choosing $\epsilon$ small enough, $|\delta_{h,\lambda}\rho/\rho|$ can be made arbitrarily small, uniformly in $x\in\R^d,\lambda\in\Lambda,|h|<1$. In particular, we can choose it to be small enough such that 
$\hat{\mathfrak{p}}_h^{\gamma}\geq 0$. 
Moreover, all of the smoothness and boundedness properties of the coefficients are preserved. Therefore \eqref{May3.0}-\eqref{May3.0-initial} is a finite difference scheme for the equation \eqref{May3}-\eqref{May3-initial} such that it satisfies Assumptions 
\ref{AsLambda} through \ref{As2}. Claims (iii) and (iv) then follow from applying Theorems \ref{Thm1} and \ref{Thm3}. 

\qed

\medskip
\textbf{Acknowledgement}
The authors would like to thank the anonymous referee for his remarks, 
which helped to improve the presentation of the paper.

\end{document}